\newtheorem{thm}{Theorem}
\newtheorem{lem}[thm]{Lemma}
\newtheorem{prop}[thm]{Proposition}
\theoremstyle{remark}
\newtheorem{rmk}[thm]{Remark}
\newtheorem{question}{Question}
\newtheorem{example}[thm]{Example}
\theoremstyle{definition}
\newtheorem{defi}[thm]{Definition}
\numberwithin{thm}{section} 
\numberwithin{equation}{section}
\newcommand{\Rmnum}[1]{\expandafter\@slowromancap\romannumeral #1@}
\def\polhk#1{\setbox0=\hbox{#1}{\ooalign{\hidewidth
    \lower1.5ex\hbox{`}\hidewidth\crcr\unhbox0}}}
\def\Z{\mathbb{Z}}
\def\R{\mathbb{R}}
\def\Rp{\mathbb{R}_+}
\def\T{\mathbb{T}}
\def\H{\mathbb{H}}
\def\L{\mathbb{L}}
\def\X{X}
\def\Colm{\underline{\mathcal{C}}}
\def\R{{\mathbb R}}
\def\X{{\mathbf X}}
\def\H{{\mathbb H}}
\def\S{{\mathbb S}}
\def\T{{\mathbb T}}
\def\L{{\mathbb L}}
\newcommand{\vep}{\varepsilon}
\newcommand{\bpm}{\begin{pmatrix}}
\newcommand{\epm}{\end{pmatrix}}
\title[Convexity preserving in geodesic spaces]{\protect{Convexity preserving properties for Hamilton-Jacobi equations in geodesic spaces}}
\author[Q. Liu]{Qing Liu}
\address{Qing Liu, Department of Applied Mathematics, Faculty of Science, Fukuoka University, Fukuoka 814-0180, Japan, {\tt qingliu@fukuoka-u.ac.jp}}
\author[A. Nakayasu]{Atsushi Nakayasu}
\address{Atsushi Nakayasu, Graduate School of Mathematical Sciences, University of Tokyo, 3-8-1 Komaba, Meguro-ku, Tokyo, Japan, {\tt ankys@ms.u-tokyo.ac.jp}}
\date{\today}
\begin{document}

\begin{abstract}
We study the convexity preserving property for a class of time-dependent Hamilton-Jacobi equations in a complete geodesic space. Assuming that the Hamiltonian is nondecreasing, we show that in a Busemann space the unique metric viscosity solution preserves the geodesic convexity of the initial value at any time. We provide two approaches and also discuss several generalizations for more general geodesic spaces including the lattice graph. 
\end{abstract}

\subjclass[2010]{35E10, 30L99, 49L25}
\keywords{convexity preserving properties, metric spaces, Hamilton-Jacobi equations, viscosity solutions}

\maketitle

\section{Introduction}
Inspired by a recent trend in the study of fully nonlinear partial differential equations in metric spaces, in this paper we discuss the convexity preserving property for a class of first-order Hamilton-Jacobi equations in metric spaces. More precisely, for a metric space $\X$ equipped with a metric $d$, we consider the Hamilton-Jacobi equation
\begin{linenomath}
\begin{numcases}{\textrm{(HJ)}\quad}
\partial_t u+H(|\nabla u|)=0 &\text{in $\X\times (0, \infty)$,} \label{hj1} \\
u(x, 0)=u_0(x) &\text{in $\X$,}  \label{hj2}
\end{numcases}
\end{linenomath}
and show, under appropriate assumptions on the space $(\X, d)$ and the Hamiltonian $H$ defined on $\Rp := [0, \infty)$, that the unique continuous viscosity solution $u(x, t)$ is convex in the space variable $x$ at any time $t\geq 0$ provided that the initial value $u_0$ is convex in $\X$.
The notion of convexity of functions here on metric spaces will be clarified later. 

Well-posedness of the Hamilton-Jacobi equation above in the framework of viscosity solutions has recently been established in a large class of metric spaces called \emph{geodesic spaces} \cite{AF, GaS2, GHN, GaS, Na1, NN}; see also well-posedness results and applications on networks \cite{Schi, ACCT, IMZ, SC, IMo1, IMo2, CCM}. A metric space $(\X, d)$ is said to be \emph{geodesic} if for any $x, y\in \X$, there exists a geodesic $\gamma_t$ ($t\in [0, 1]$) in $\X$ joining $x, y$ with a constant speed; in other words, we have $\gamma_0=x$, $\gamma_1=y$ and $d(\gamma_s, \gamma_t)=|s-t|d(x, y)$ for any $s, t\in [0, 1]$. Throughout this paper, we assume that $(\X, d)$ is a complete geodesic space. 
We also understand the term $|\nabla u|$ as the \emph{local slope}
\[
|\nabla u|(x, t) := \limsup_{y \to x} \frac{|u(y, t)-u(x, t)|}{d(x, y)}
\]
if the right hand side is finite.

\subsection{Background and motivations}
Convexity properties are well known for general parabolic or elliptic equations in the Euclidean spaces. The convexity of classical solutions to uniformly elliptic equations were studied by Korevaar \cite{Kor} and Kennington \cite{Ke} by establishing a convexity maximum principle. Such a convexity maximum principle was then generalized in the framework of viscosity solutions to a very general class of degenerate parabolic equations in \cite{GGIS}. We remark that the convexity of solutions to various PDEs has also been extensively studied in \cite{K2, K3, Sak, DK, ALL, Ju} etc. 

As for the mean curvature flow and other geometric motions, we refer to \cite{Hui} for a well-known result on the convexity preserving property of an evolving surface; see also \cite{GH} for the two dimensional case in detail. This property was later formulated in terms of level set method in \cite{ES1} and \cite{GGIS}, and was recently proved in \cite{LSZ} by using a deterministic game-theoretic approach established by Kohn and Serfaty \cite{KS1}. 

Considering that the framework of viscosity solution theory for the first order equations has been generalized in geodesic spaces, we are interested in extending the convexity results above also to the general circumstances. In general, one cannot expect the convexity preserving property still hold in general geodesic spaces even for very simple equations. In fact, the solution of a first order linear PDE fails to preserve horizontal convexity in the first Heisenberg group, as shown in \cite{LMZ}. It is however not clear whether preservation of horizontal convexity holds even for the Hamilton-Jacobi flow (HJ). In the present work, we consider geodesic spaces with more convexity structure and focus our attention only on the simple equation (HJ) in order to provide a clear view of the ideas. Many of our results can be generalized to handle the Hamiltonians that depend also on the space variable $x$ in a concave manner. 

\subsection{Main results}
There are two important questions that need to be answered before proceeding to our main results. First, it is necessary to introduce an appropriate notion of convexity of functions that are defined on metric spaces. Since the space is assumed to be geodesic so as to obtain uniqueness and existence of viscosity solutions,  a natural choice is to employ the so-called (weak) geodesic convexity. More precisely, in Definition \ref{def weak convex} we define a function $f: \X\to \R$ to be weakly geodesically convex in $(\X, d)$ if 
\[
f(x)+f(y)\geq \inf_{z\in M(x, y)} 2f(z) \quad \text{for all $x, y\in \X$.}
\]
Here $M(x, y)$ denotes the set of all midpoints between $x, y\in \X$, which are not necessarily a singleton or even a compact set. It is certainly possible to define strong geodesic convexity by replacing the infimum with supremum in the expression above. We however mainly discuss the weaker notion in this work.

The second issue is related to a proper structure of metric spaces that is compatible with the geodesic convexity. Besides the necessary conditions for convexity preserving property we are concerned with, it is also necessary to avoid the situation that the only convex functions on $\X$ are constants. For example, if $\X=\S^1$ with the metric defined to be the shortest length of arcs connecting  points,  there are no convex functions other than constants. Similar situations also appear when $\X$ is two-dimensional lattice graph $\L^2$ with $l^1$ distance; see Proposition \ref{trivial convex}. 

Attempting to avoid such a trivial situation, we need more assumptions on the metric space. One option is to assume that the metric is convex, i.e., the distance function $d(\cdot , a)$ for every fixed $a\in \X$ is geodesically convex. This turns out to be closely related to the intensive study of spaces with convex metrics; consult \cite{Jbook, Pa} for more details. One of the main objects in the literature is the so-called Busemann space. More precisely, $(\X, d)$ is a Busemann space if for all $x, y, x', y'\in \X$
\begin{equation}
\label{eq busemann ineq 4}
2 d(z, z') \le d(x, x')+d(y, y') \quad \text{for any $z \in M(x, y)$ and $z' \in M(x', y')$.}
\end{equation}
Note that this condition implies that $(\X, d)$ is uniquely geodesic (so that we can denote the only point in $M(x, y) $ by $m(x, y)$ for any $x, y\in \X$) and therefore the notions of weak and strong geodesic convexity are equivalent. Examples of the Busemann space include all Hilbert spaces, hyperbolic spaces, trees, and networks without loops or cycles.
Under this assumption on metric convexity, we obtain the following result.

\begin{thm}[Main result]
\label{thm intro}
Assume that $(\X, d)$ is a complete Busemann space.
Assume that $H \colon \Rp \to \R$ is continuous and non-decreasing.
Let $u$ be the unique continuous viscosity solution of (HJ) with $u_0$ Lipschitz continuous in $\X$.
If $u_0$ is geodesically convex,
then $u(\cdot, t)$ is also geodesically convex for all $t \ge 0$.
\end{thm}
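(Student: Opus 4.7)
The plan is to compare the viscosity solution $u$ with the auxiliary midpoint infimum
\[
v(z,t) := \inf\bigl\{\tfrac{1}{2}\bigl(u(x,t)+u(y,t)\bigr) : x,y\in\X,\ m(x,y)=z\bigr\}.
\]
Taking $x=y=z$ in the infimum gives $v\le u$ on $\X\times[0,\infty)$, and the geodesic convexity of $u_0$ reads $u_0(m(x,y))\le\tfrac{1}{2}(u_0(x)+u_0(y))$ for all $x,y$, which is exactly $v(\cdot,0)\ge u_0=u(\cdot,0)$. If I can also show that $v$ is a viscosity supersolution of (HJ) on $\X\times(0,\infty)$, then the metric comparison principle referenced in the introduction yields $v\ge u$ throughout, and combined with $v\le u$ this forces $v=u$; unwinding the definition of $v$ then gives exactly the geodesic convexity of $u(\cdot,t)$.

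Verifying that $v$ is a supersolution is the crux. Given a smooth test function $\phi$ touching $v$ from below at $(z_0,t_0)$ with $t_0>0$, I select a near-minimizing sequence $(x_n,y_n)$ with $m(x_n,y_n)=z_0$ and $\tfrac12(u(x_n,t_0)+u(y_n,t_0))\to v(z_0,t_0)$. Then the doubled and time-decoupled functional
\[
\Psi_n(x,y,t,s) := \tfrac{1}{2}u(x,t)+\tfrac{1}{2}u(y,s)-\phi\bigl(m(x,y),\tfrac{t+s}{2}\bigr)+\alpha\bigl(d(x,x_n)^2+d(y,y_n)^2\bigr)+\beta(t-s)^2
\]
nearly attains its minimum at $(x_n,y_n,t_0,t_0)$. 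Since $\X$ need not be locally compact, Ekeland's variational principle (applied to a small perturbation of $\Psi_n$) supplies genuine minimizers $(\tilde x_n,\tilde y_n,\tilde t_n,\tilde s_n)$, and the definition of viscosity supersolution for $u$ can then be applied separately at $(\tilde x_n,\tilde t_n)$ and $(\tilde y_n,\tilde s_n)$ using the smooth test functions $(x,t)\mapsto 2\phi(m(x,\tilde y_n),\tfrac{t+\tilde s_n}{2})-2\alpha d(x,x_n)^2-2\beta(t-\tilde s_n)^2$ and its mirror image.

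At this point the Busemann inequality is decisive. From $2d(m(x,\tilde y_n),m(\tilde x_n,\tilde y_n))\le d(x,\tilde x_n)$ I obtain the slope bound $|\nabla_x m(\cdot,\tilde y_n)|(\tilde x_n)\le 1/2$, and the chain rule for local slopes (together with subadditivity absorbing the $\alpha d^2$ penalty) produces $|\nabla\psi^x|(\tilde x_n,\tilde t_n)\le|\nabla\phi|(m(\tilde x_n,\tilde y_n),\bar\tau_n)+O(\alpha\, d(\tilde x_n,x_n))$, with $\bar\tau_n=(\tilde t_n+\tilde s_n)/2$. Because $H$ is nondecreasing, this slope inequality passes through $H$, and averaging the two supersolution inequalities cancels the $\beta(\tilde t_n-\tilde s_n)$ term. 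Sending $n\to\infty$ and then $\alpha\to 0$, $\beta\to\infty$ in the usual order makes the error terms vanish, so by continuity of $\phi_t$, $|\nabla\phi|$ and $H$ one obtains the required supersolution inequality $\phi_t(z_0,t_0)+H(|\nabla\phi|(z_0,t_0))\ge 0$.

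The main obstacle is this doubling-of-variables step in the metric setting: securing true minimizers via Ekeland in the absence of local compactness, verifying that the composite maps $x\mapsto\phi(m(x,\tilde y_n),\cdot)$ qualify as admissible test functions in the metric viscosity framework used by the cited references, and, above all, carrying out the slope estimate along the midpoint map, which goes through precisely because the Busemann inequality makes $m(\cdot,\tilde y_n)$ a genuine $\tfrac{1}{2}$-contraction. The hypothesis that $H$ is nondecreasing enters essentially rather than incidentally: the Busemann bound yields an inequality between local slopes but not an identity, and only monotonicity of $H$ lets us conclude $H(|\nabla\psi^x|)\le H(|\nabla\phi|)$ in the final step.
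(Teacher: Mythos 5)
Your strategy---compare $u$ with the midpoint envelope $v(z,t)=\inf\{\tfrac12(u(x,t)+u(y,t)):m(x,y)=z\}$ and show $v$ is a supersolution---is the classical convex-envelope route, and at its technical core it runs on exactly the same ingredients as the paper's PDE proof of this theorem (Theorem \ref{thm convexity2}): the Busemann $\tfrac12$-contraction of $x\mapsto m(x,y)$ (Lemma \ref{thm diff mid}), doubling of variables with Ekeland's principle in the absence of local compactness, and the monotonicity of $H$ to absorb the slope of the midpoint composition. The paper packages this differently: it argues by contradiction, maximizing $2u(z,t)-u(x,r)-u(y,s)-k\,d(z,m(x,y))^2$ (plus time-decoupling and barrier terms) over all six variables, with an a priori bound (Proposition \ref{thm conv grow}) guaranteeing that this functional is bounded above so that Ekeland applies. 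Your packaging is viable in principle, but two steps are genuinely unresolved.

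First, to invoke the metric comparison principle you need $v$ to be lower semicontinuous; $v$ is an infimum of continuous functions over a constraint set $\{(x,y):m(x,y)=z\}$ that varies with $z$, and in a complete Busemann space that is neither locally compact nor geodesically extendable there is no evident way to pass near-minimizing pairs for $z_n$ to a competitor pair for the limit point $z$. (Finiteness is not the issue: $u\ge u_0-Ct$ together with convexity of $u_0$ gives $v(z,t)\ge u_0(z)-Ct$.) Second, and more seriously, the step ``the chain rule for local slopes \dots\ this slope inequality passes through $H$'' is not a legitimate move in the metric viscosity framework you are relying on. The composite $x\mapsto\phi(m(x,\tilde y_n),\cdot)$ is in general \emph{not} an admissible smooth part $\psi_1$ of a test function, because its one-sided slopes $|\nabla^{\pm}|$ need not coincide with $|\nabla|$: Remark \ref{rmk monotonicity} exhibits $x\mapsto d(0,m(x,1))^2$ on $\Rp$, for which $|\nabla^-\psi|(0)=0$ while $|\nabla\psi|(0)=1/2$. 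Hence you cannot simply evaluate $H$ at an upper bound for the slope of this composite. The paper's resolution is to place the midpoint composition in the merely Lipschitz slot $\psi_2$, so that the supersolution inequality produces the term $H^{|\nabla\psi_2|^*}(0)$, and only then does monotonicity of $H$, combined with Lemma \ref{thm diff mid}, convert this into $H(|\nabla\psi_2|^*)\le H(\tfrac12\cdot(\text{slope of }\phi)+O(\alpha))$. Until your slope estimate is routed through that mechanism and the semicontinuity of $v$ is settled, the proof is incomplete.
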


Although such a result looks more or less expected, its proof is less straightforward and relies on the recent development of viscosity solution theory in metric spaces. Our PDE proof is based on an adaptation of the convexity maximum principle to geodesic spaces, for which we utilize the standard technique of doubling variables and employ 
\[
\varphi(x, y, z) =d(z, m(x, y))^2 \quad \text{ for $x, y, z\in \X$}
\] 
as a penalty function. Roughly speaking, on one hand, it is not difficult to find that 
\[
|\nabla_z d(z, m(x, y))|=1
\]
if $z\neq m(x, y)$, and on the other hand, we have
\begin{equation}\label{intro Busemann}
|\nabla_x d(z, m(x, y))|\leq {1\over 2}, \quad |\nabla_y d(z, m(x, y))|\leq {1\over 2},
\end{equation}
owing to the assumption that $(\X, d)$ is a Busemann space. We remark that \eqref{intro Busemann} is essentially a differential version of the condition \eqref{eq busemann ineq 4} and plays a key role in the convexity proof; see Proposition \ref{thm diff mid} for a rigorous derivation. Adapting the estimates above to the notion of metric viscosity solutions introduced in \cite{GaS2, GaS}, we complete our rigorous proof by plugging them into viscosity inequalities and apply the comparison arguments as in \cite{GGIS} etc. 

The monotonicity assumption on $H$ proves to be necessary; see Remark \ref{rmk monotonicity} for a counterexample when $\X=\Rp=[0, \infty)$ and $H$ is decreasing.  Moreover, since the notion of metric viscosity solution in this type of closed region essentially reduces to the state constraint boundary value problem in the Euclidean space (as first introduced by Soner \cite{So1}), our example also shows that viscosity solutions of time-dependent state constraint problems fail to enjoy the convexity preserving property in general, although the convexity of solutions is known to hold in the stationary case \cite{ALL}.

In Theorem \ref{thm convexity1}, motivated by \cite{LSZ}, we present a more direct approach with convex Hamiltonians, which relies on the celebrated Hopf-Lax formula given by \cite{GaS2} in the context of geodesic spaces:
\begin{equation}
\tag{HL}
\label{eq hl}
u(x, t) = \inf_{a \in \X}\left\{ u_0(a)+t L\left({d(a, x) \over t}\right) \right\} \quad \text{for $(x, t) \in \X\times(0, \infty)$,}
\end{equation}
where $L: \Rp\to \R$ denotes the corresponding convex and coercive Lagrangian. 
Given a convex initial value $u_0$ in $\X$, we express the solution explicitly and show that the Hopf-Lax operator never breaks the convexity of $u_0$.
A formal argument is as follows.
Suppose for any endpoints $x, y\in \X$, there exist $a, b\in \X$ such that
\[
u(x, t) \ge u_0(a)+t L\left({d(a, x) \over t}\right),
\quad u(y, t) \ge u_0(b)+t L\left({d(b, y) \over t}\right),
\]
by the Hopf-Lax formula, then due to the fact that $\X$ is a Busemann space and $u_0$ is geodesically convex, the midpoints satisfy
\[
2u_0(m(a, b)) \le u_0(a)+u_0(b),
\quad 2d(m(a, b), m(x, y)) \le d(a, x)+d(b, y).
\]
It immediately follows from the convexity of $L$ that 
\[
2u(m(x, y), t)
\le 2u_0(m(a, b))+2t L\left({d(m(a, b), m(x, y)) \over t}\right)
\le u(x, t)+u(y, t),
\]
as desired.  

An advantage of this method is that it demands weaker convexity of the metric. In Theorem \ref{thm convexity1}, we substitute Busemann's condition \eqref{eq busemann ineq 4} with a local (but uniform) version, which resembles the notion of so-called nonpositively curved spaces in the Busemann space (Busemann NPC space for short). However, in order to use the Hopf-Lax formula, we need to further assume the convexity of the Hamiltonian in addition to its coercivity and monotonicity. The distance function is not convex but there are still examples of geodesically convex functions; see Example \ref{ex npc only} and Example \ref{ex npc only2}.

We continue to investigate the more challenging case when the metric $d$ is no longer convex. We are particularly interested in the discrete spaces such as the two-dimensional lattice graph $\L^2$ with $l^1$ metric. In spite of the fact that $\L^2$ is a Busemann NPC space, Theorem \ref{thm convexity1} reduces to a constant preserving property.  In order to obtain a more meaningful result, we further relax the notion of convexity in $\L^2$. Several attempts are made in Section \ref{sec lattice}, where concrete examples are constructed to reveal the non-preservation of convexity.

For a metric space $\X$ like $\L^2$ and a continuous convex coercive $H: \Rp\to \R$, we are able to show, as elaborated in Theorem \ref{thm convexity3}, that the metric viscosity solution $u$ of 
\begin{linenomath}
\begin{numcases}{\rm (HJ2)\quad}
\partial_t u-H(|\nabla u|)=0 &\text{in $\X\times (0, \infty)$,} \label{eq negative hj} \\
u(\cdot , 0)=u_0 &\text{in $\X$}  \nonumber
\end{numcases}
\end{linenomath}
preserves the following convexity-related property of the initial value $u_0$:
\begin{equation}\label{intro infinity}
2 u_0(z) \le \sup_{B_r(z)} u_0+\inf_{B_r(z)} u_0 \quad \text{for all $z \in \X$ and $r > 0$ small enough,}
\end{equation}
where $B_r(z)$ denote the closed ball centered at $z \in \X$ with radius $r > 0$, that is, $B_r(z) := \{ x \in \X \mid d(z, x) \le r \}$.
A benefit of proposing such a convexity concept lies at the very weak demand on the structure of metric space. In order to understand this type of convexity of metric viscosity solutions, one does not need any assumption more than the existence of geodesics in $\X$.  This notion is also closely related to the game interpretation of infinity Laplacian studied by Peres, Schramm, Sheffield and Wilson \cite{PSSW}. We therefore call the property \eqref{intro infinity} infinity-subharmoniousness, following the terminology in \cite{MPR3}. In the Euclidean space $\R^N$, it amounts to saying that $u_0$ is convex along the direction of its gradient. 

It is worth pointing out that we here  replace the Hamiltonian $H$ by $-H$. As opposed to the argument for Theorem \ref{thm convexity1},  in the proof of Theorem \ref{thm convexity3} we need to apply the Hopf-Lax formula to the midpoint $z$ in the convexity inequality before determining the corresponding endpoints $x$ and $y$. This modified process turns out to be more consistent with a concave Hamiltonian than a convex one. 

\subsection{Organization of the paper}
In Section \ref{sec convex function}, we give several definitions of convex functions in geodesic spaces. In Section \ref{sec convex space}, we discuss the assumptions on the convexity of geodesic spaces, including the properties of Busemann spaces, Busemann NPC spaces etc. Section \ref{sec viscosity} is a review of the known results on the definition of metric viscosity solutions and the Hopf-Lax formula for (HJ). Our main results on the convexity preserving property are presented in Section \ref{sec main}. Section \ref{sec lattice} is devoted to discussions on more general situations such as the lattice $\L^2$ and an application of preservation of infinity subharmoniousness. 

\subsection*{Acknowledgments}
Part of this work was completed while the second author was visiting Fukuoka University, whose support and hospitality 
are gratefully acknowledged. The work of the first author was supported by JSPS Grant-in-Aid for Young Scientists, No. 16K17635. 
The work of the second author was also supported by Grant-in-Aid for JSPS Fellows No. 25-7077.

\section{Convex functions in geodesic spaces}
\label{sec convex function}


Before introducing notions of convexity, we first define midpoints of geodesics in a geodesic space $(\X, d)$.

\begin{defi}[Midpoints]
Let $x$ and $y$ be two points of a complete geodesic space $(\X, d)$.
We say $z \in \X$ is a \emph{midpoint} between $x$ and $y$
if $z$ lies on a geodesic between $x, y$ and satisfies $d(x, z) = d(y, z)$.
We denote by $M(x, y)$ the collection of all midpoints between $x$ and $y$.
\end{defi}

It is clear that
\[
M(x, y) = B_{d(x, y)/2}(x)\cap B_{d(x, y)/2}(y).
\]
In addition, we see for every $z, z' \in M(x, y)$ that
\[
d(z, z') = d(z, x)+d(x, z') \le d(x, y).
\]
Therefore, $M(x, y)$ is a bounded closed set,
and if $\X$ is locally compact, then $M(x, y)$ is compact.

\subsection{Geodesic convexity}

Let us next propose several major definitions of convexity of functions in our present work.

The first one is convexity along geodesics.

\begin{defi}[Geodesic convexity]
\label{def weak convex}
A continuous function $f$ in a complete geodesic space $\X$ is said to be \emph{weakly geodesically convex}
if
\begin{equation}
\label{eq weak convex}
\inf_{z \in M(x, y)}2 f(z) \le f(x)+f(y)
\end{equation}
for any $x, y \in \X$.
\end{defi}

\begin{rmk}
If the midpoint set $M(x, y)$ is compact,
the condition \eqref{eq weak convex} means that there exists some midpoint $z \in M(x, y)$ satisfying
\begin{equation}
\label{eq convex}
2 f(z) \le f(x)+f(y).
\end{equation}
\end{rmk}

Such a notion of convexity also appears in \cite{AGS}.
It can be strengthened by demanding that 
\begin{equation}
\label{eq strong convex}
\sup_{z \in M(x, y)}2 f(z) \le f(x)+f(y)
\end{equation}
instead of \eqref{eq weak convex}.
Any function $f \in C(\X)$ satisfying \eqref{eq strong convex} for any $x, y \in \X$ is said to be \emph{strongly geodesically convex} in this work.

When geodesics between every $x, y \in \X$ are unique,
then the conditions \eqref{eq weak convex} and \eqref{eq strong convex} are equivalent since the midpoint set $M(x, y)$ is singleton.
In this case we say $f$ is \emph{geodesically convex} for short if it satisfies \eqref{eq weak convex} and denote by $m(x, y)$ the unique point in $M(x, y)$.



The above notion of convexity can be localized.

\begin{lem}
\label{local convexity}
A function $f$ on a geodesic space $(\X, d)$ is weakly geodesically convex
if and only if it is locally weakly geodesically convex;
that is, there exists $\delta > 0$ such that \eqref{eq weak convex} holds for all $x, y \in \X$ with $d(x, y) \le \delta$.
\end{lem}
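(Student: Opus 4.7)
The forward implication is immediate: the defining inequality for weak geodesic convexity restricts trivially to any sub-collection of pairs, in particular to those within any fixed threshold $\delta > 0$.

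For the converse, the natural plan is to extend the local inequality to arbitrary scales by iteration along a geodesic. Given $x, y \in \X$ with $d(x, y) = D$ and $\vep > 0$, I would fix a constant-speed geodesic $\gamma \colon [0, 1] \to \X$ joining $x$ and $y$ and choose $n$ so that $D/2^n \le \delta$. At this scale the local inequality provides, for each adjacent pair $(\gamma_{i/2^n}, \gamma_{(i+1)/2^n})$, some midpoint $z_i$ with $2 f(z_i) \le f(\gamma_{i/2^n}) + f(\gamma_{(i+1)/2^n}) + \vep/2^n$. I would then combine these estimates upward across scales; equivalently, one can proceed by strong induction on the scale parameter $D$, passing from the global inequality at distance $\le D$ to distance $\le 2D$ by choosing $w \in M(x, y)$, applying the inductive hypothesis to $(x, w)$ and to $(w, y)$ to produce auxiliary midpoints $u \in M(x, w)$ and $v \in M(w, y)$, and finally invoking it once more on $(u, v)$ (whose distance is bounded by $d(x, y)/2 \le D$).

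The main obstacle I expect is that the local inequality identifies only \emph{some} midpoint satisfying the bound, not a prescribed one on $\gamma$. Consequently the point $z$ produced by the dyadic iteration lies on a geodesic joining $x$ and $y$ but is not automatically equidistant from them, and the resulting estimate, roughly $4 f(z) \le f(x) + 2 f(w) + f(y) + O(\vep)$, still involves the intermediate value $f(w)$. To close the argument, I would take the infimum over $w \in M(x, y)$ on the right-hand side, leveraging precisely the infimum formulation of weak geodesic convexity together with continuity of $f$ (and compactness of $M(x, y)$ when $\X$ is locally compact). This should transfer the auxiliary bound to $\inf_{z \in M(x, y)} f(z)$ and deliver the global inequality as $\vep \to 0$.
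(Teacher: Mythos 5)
Your overall strategy is the same as the paper's: reduce to a doubling step (from distance $\le D$ to $\le 2D$), pick $w \in M(x, y)$, use the hypothesis on $(x, w)$ and $(w, y)$ to produce $u \in M(x, w)$ and $v \in M(w, y)$, apply it once more to $(u, v)$, and then let $w$ range over $M(x, y)$ so that the term $2f(w)$ on the right cancels against the infimum on the left (the paper's $z_0, z_1, z_2$ play the roles of your $w, u, v$). The forward direction and the cancellation step are fine as you describe them; note that the cancellation needs no continuity or compactness at all, only the arbitrariness of $w$ and the observation that if $\inf_{M(x,y)} f = -\infty$ the conclusion is vacuous.

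The genuine gap is the step you flag but then leave unresolved: you assert that the final midpoint $z \in M(u, v)$ ``lies on a geodesic joining $x$ and $y$ but is not automatically equidistant from them,'' and your proposed remedy (infimum over $w$, continuity, compactness) addresses only the elimination of $f(w)$ from the right-hand side, not the membership of $z$ in $M(x, y)$. Without $M(u, v) \subset M(x, y)$ the left-hand side of your final estimate is an infimum over the wrong set and \eqref{eq weak convex} does not follow. In fact the claim you doubt is true and is exactly what the paper proves: first $d(u, v) = d(x, y)/2$ (the upper bound via the triangle through $w$, the lower bound from $d(x,y) \le d(x,u)+d(u,v)+d(v,y)$), and then for any $z \in M(u, v)$ one has $d(x, z) \le d(x, u)+d(u, z) = d(x,y)/2$ and $d(z, y) \le d(x,y)/2$, while $d(x, z)+d(z, y) \ge d(x, y)$ forces both to be equalities; hence $z$ is equidistant from $x$ and $y$ and lies on a concatenated geodesic, i.e.\ $z \in M(x, y)$. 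Supplying this computation closes your argument and makes it coincide with the paper's proof.
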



\begin{proof}
It is clear that weak geodesic convexity implies local weak geodesic convexity.
Since $d(x, y) < \infty$ for every $x, y \in \X$,
it suffices to show that any function $f$ that satisfies \eqref{eq weak convex} for any $x, y\in \X$ with $d(x, y)\leq \delta$ also satisfies the same inequality \eqref{eq weak convex} with $d(x, y)\leq 2\delta$.


Let $x, y \in \X$ be such that $d(x, y) \le 2\delta$ and fix $z_0 \in M(x, y)$.
Note that $d(x, z_0) = d(z_0, y) = d(x, y)/2 \le \delta$.
Then we may apply the convexity assumption \eqref{eq weak convex} of $f$ to obtain
\begin{equation}
\label{eq:loc convex1}
 \inf_{z_1 \in M(x, z_0)} 2f(z_1) \le f(x)+f(z_0),
\qquad  \inf_{z_2 \in M(z_0, y)} 2f(z_2) \le f(z_0)+f(y).
\end{equation}

Let us take arbitrary $z_1 \in M(x, z_0)$ and $z_2\in M(z_0, y)$.
We first claim 
\begin{equation}
\label{eq:loc convex3}
d(z_1, z_2) = {1\over 2}d(x, y).
\end{equation}
Indeed, we easily get
\[
d(z_1, z_2) \le d(z_1, z_0)+d(z_0, z_2) = {1\over 2}d(x, y)
\]
and, on the other hand, 
\[
d(x, y) \le d(x, z_1)+d(z_1, z_2)+d(z_2, y) = {1\over 2}d(x, y)+d(z_1, z_2).
\]
Then \eqref{eq:loc convex3} follows immediately.
Since $d(z_1, z_2) \le \delta$,
using again \eqref{eq weak convex} with $x=z_1$ and $y=z_2$,
we are led to
\begin{equation}
\label{eq:loc convex2}
\inf_{z \in M(z_1, z_2)} 2f(z) \le f(z_1)+f(z_2).
\end{equation}

We next show that 
\begin{equation}\label{eq:loc convex5}
M(z_1, z_2) \subset M(x, y).
\end{equation}
For any fixed $z_\vep \in M(z_1, z_2)$,
we can easily see by \eqref{eq:loc convex3} that 
\[
d(z_1, z_\vep) = d(z_\vep, z_2) = {1 \over 4}d(x, y),
\]
and therefore
\begin{equation}
\label{eq:loc convex4}
d(x, z_\vep) \le d(x, z_1)+d(z_1, z_\vep) = {1\over 2}d(x, y),
\quad d(z_\vep, y) \le d(z_\vep, z_2)+d(z_2, y) = {1\over 2}d(x, y).
\end{equation}
Since $d(x, y) \le d(x, z_\vep)+d(z_\vep, y)$, the equalities in \eqref{eq:loc convex4} hold,
and so $z_\vep \in M(x, y)$.

Combining \eqref{eq:loc convex1}, \eqref{eq:loc convex2} and \eqref{eq:loc convex5}, we end up with 
\[
\inf_{z \in M(x, y)} 4f(z) \le f(x)+f(y)+2 f(z_0).
\]
Since $z_0 \in M(x, y)$ is arbitrary, we conclude the proof.
\end{proof}

\subsection{Pointwise convexity}

For our applications in some particular metric spaces,
we also define a different type of convexity, which is much weaker than the geodesic convexity but relies far less on the geometry of the metric space.

\begin{defi}[Infinity-subharmoniousness]
\label{def wp-convex}
We say a continuous function $f$ in a metric space $(\X, d)$ is \emph{$\infty$-subharmonious}
if for any $z \in \X$ there exists $\delta > 0$ such that for all $r \in (0, \delta]$
\begin{equation}
\label{infinity convex}
2 f(z) \le \sup_{B_r(z)} f+\inf_{B_r(z)} f.
\end{equation}
\end{defi}

As suggested by the naming of such a notion,
the convexity inequality \eqref{infinity convex} amounts to saying that the infinity Laplacian operator $\Delta_\infty$ acted on $f$ in $\L^2$ is nonnegative, which is related to the so-called tug-of-war game interpretation of infinity Laplace equation; see for example \cite{PSSW, AS}.



Inspired by the property \eqref{infinity convex}, we provide another notion of convexity restricted to the ``interior'' of $\X$ in a geodesic manner.

A point $z\in X$ is called a \emph{geodesic interior point} if for any $r > 0$ sufficiently small and for any $x\in B_r(z)\subset \X$, there exists $y\in B_r(z)$ such that $z\in M(x, y)$.
Let $\X^\circ$ denote the set of all geodesic interior points in $\X$.

\begin{defi}[Pointwise convexity]
\label{def point-convex}
A continuous function $f$ in a geodesic space $(\X, d)$ is said to be \emph{pointwise convex}
if for any $z\in \X^\circ$ and any $r > 0$ sufficiently small, there exist $x, y \in B_r(z) \setminus \{ z \}$ such that $z \in M(x, y)$ and \eqref{eq convex} holds.
\end{defi}

Roughly speaking, this weak notion asks for convexity only in one direction at every interior point. 
It is clear that for any $f\in C^2(\R)$,  its $\infty$-subharmoniousness is equivalent to the pointwise convexity. When $\X\neq \X^\circ$, our definition does not require the convexity inequality \eqref{eq convex} to hold with $z\in \X\setminus \X^\circ$.

The following result shows that $\infty$-subharmonious functions are pointwise convex.

 \begin{prop}[Infinity-subharmoniousness implies pointwise convexity]\label{prop har-pt}
Let $(\X, d)$ be a locally compact geodesic space.
If $f\in C(\X)$ is $\infty$-subharmonious, then $f$ is pointwise convex.
\end{prop}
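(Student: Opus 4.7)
The plan is to couple the analytic inequality provided by $\infty$-subharmoniousness with the geometric freedom encoded in the definition of a geodesic interior point. Fix $z \in \X^\circ$. Using local compactness of $\X$ together with the two hypotheses, I choose $\delta > 0$ small enough that (a) the closed ball $B_\delta(z)$ is compact, (b) for every $x \in B_\delta(z)$ there exists $y \in B_\delta(z)$ with $z \in M(x, y)$, and (c) $2f(z) \le \sup_{B_r(z)} f + \inf_{B_r(z)} f$ for all $r \in (0, \delta]$.

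For any such $r$, pick $x^*, x_* \in B_r(z)$ realizing the supremum and the infimum of $f$ on the compact ball $B_r(z)$, so that (c) rewrites as $2f(z) \le f(x^*) + f(x_*)$. The pair $(x^*, x_*)$ need not itself consist of geodesic midpoint partners around $z$, but this is easily repaired by replacing $x_*$ with the partner $y^*$ of $x^*$ supplied by (b). Indeed, when $x^* \neq z$, there exists $y^* \in B_r(z)$ with $z \in M(x^*, y^*)$, and the identity $d(z, y^*) = d(z, x^*) > 0$ forces $y^* \neq z$. Since $y^* \in B_r(z)$ one has $f(y^*) \ge f(x_*)$, whence
\[
f(x^*) + f(y^*) \ge f(x^*) + f(x_*) \ge 2f(z),
\]
so the choice $(x, y) = (x^*, y^*)$ witnesses pointwise convexity at $z$.

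The only remaining case is that the supremum of $f$ over $B_r(z)$ is attained at $z$ itself, and this is where the one genuine subtlety lies. There (c) degenerates to $f(z) \le \inf_{B_r(z)} f$, which combined with the reverse inequality $\inf_{B_r(z)} f \le f(z)$ pins $f$ to the constant value $f(z)$ throughout $B_r(z)$. I then pick any $x \in B_r(z) \setminus \{z\}$ (such an $x$ is available because the geodesic interior property is non-trivial at $z$) and its midpoint partner $y$ from (b), which again satisfies $y \neq z$; the convexity inequality \eqref{eq convex} now becomes the trivial identity $2f(z) = f(x) + f(y)$, and the proof concludes.
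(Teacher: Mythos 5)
Your proof is correct and is precisely the ``straightforward'' argument that the paper omits: on a small compact ball take a maximizer $x^{*}$ of $f$, pair it with the partner $y^{*}$ supplied by the geodesic-interior property so that $f(x^{*})+f(y^{*})\ge \sup_{B_r(z)}f+\inf_{B_r(z)}f\ge 2f(z)$, and handle the degenerate case $\sup_{B_r(z)}f=f(z)$ by observing that $f$ is then constant on the ball. The only cosmetic point is that the nonemptiness of $B_r(z)\setminus\{z\}$ follows from $\X$ being a geodesic (hence connected) space with more than one point, rather than from the interior-point property itself.
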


We omit the proof, since it follows from Definitions \ref{def wp-convex} and \ref{def point-convex} in a straightforward manner. 
The reverse implication however does not hold in general, especially when $\X\neq \X^\circ$. For example, letting $\X=\Rp$ with the Euclidean metric and $f(x)=-x$ for $x\in \Rp$, we easily see that $f$ is pointwise convex in $\Rp$ but fails to be $\infty$-subharmonious, since \eqref{infinity convex} does not hold at $z=0$.




For our convenience later, we also introduce a uniform version of Definition \ref{def wp-convex}.

\begin{defi}[Uniform infinity-subharmoniousness]
\label{def u wp-convex}
For any given $\delta>0$, we say a function $f$ on a metric space $\X$ is \emph{uniformly $\infty$-subharmonious} with respect to $\delta$
if for any $z \in \X$ the inequality \eqref{infinity convex} holds for all $r \in (0, \delta]$.
\end{defi}


We conclude this section with an elementary property of geodesic spaces that will be used several times in this paper.



\begin{prop}
\label{thm separation}
Let $(\X, d)$ be a geodesic space.
Then, for every $x, y, z \in \X$ there exists $w \in \X$ such that $d(x, w) \le d(y, z)$, $d(w, z) \le d(x, y)$
and, moreover, the equality of either formula holds.
\end{prop}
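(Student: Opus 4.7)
The plan is to exhibit $w$ on an appropriately chosen geodesic so that one of the two inequalities becomes an equality by construction, while the other follows from a single application of the triangle inequality. Writing $a := d(y, z)$ and $c := d(x, y)$, the natural case split is according to whether $a \le c$ or $a \ge c$.

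If $a \le c$, I would take a constant-speed geodesic $\gamma \colon [0, 1] \to \X$ between $x$ and $y$, so that $\gamma_0 = x$, $\gamma_1 = y$ and $d(\gamma_s, \gamma_t) = |s - t|\, c$, and set $w := \gamma_{a/c}$ (or $w := x$ in the trivial case $c = 0$). Then by construction $d(x, w) = a = d(y, z)$, which is the required equality, and the triangle inequality via $y$ gives
\[
d(w, z) \le d(w, y) + d(y, z) = (c - a) + a = c = d(x, y),
\]
which is the second desired inequality.

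In the symmetric case $a \ge c$, I would instead take a constant-speed geodesic $\eta \colon [0, 1] \to \X$ from $z$ to $y$, set $w := \eta_{c/a}$ (or $w := z$ if $a = 0$), and obtain $d(w, z) = c = d(x, y)$ with equality by definition, while the triangle inequality through $y$ yields
\[
d(x, w) \le d(x, y) + d(y, w) = c + (a - c) = a = d(y, z),
\]
as required. Since $a \le c$ and $a \ge c$ together cover every possibility, this completes the case analysis.

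There is no substantial obstacle in the proof: the argument relies only on the existence of constant-speed geodesics joining any two points of $\X$ together with the triangle inequality, and it uses none of the completeness or local compactness that is imposed elsewhere in the paper. The one point worth highlighting is the choice of $y$ (rather than $x$ or $z$) as the intermediate vertex in both triangle estimates; this is what causes the two path-length contributions to telescope and deliver exactly the desired upper bound.
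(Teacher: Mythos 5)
Your proof is correct and is essentially identical to the paper's: the same case split on whether $d(y,z)\le d(x,y)$, the same placement of $w$ on a geodesic from $x$ to $y$ (resp.\ from $z$ to $y$) at distance $d(y,z)$ from $x$ (resp.\ $d(x,y)$ from $z$), and the same triangle inequality through $y$. The only difference is that you spell out the constant-speed parametrization and the degenerate cases, which the paper leaves implicit.
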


\begin{proof}
When $d(x, y) \ge d(y, z)$, take a point $w$ on a geodesic between $x$ and $y$ with $d(x, w) = d(y, z)$.
We then see that
\[
d(w, z) \le d(w, y)+d(y, z) = d(w, y)+d(x, w) = d(x, y).
\]
We may apply a symmetric argument when $d(x, y) \le d(y, z)$.  We pick a point $w$ on a geodesic between $y$ and $z$ with $d(w, z) = d(x, y)$ so that
\[
d(x, w) \le d(x, y)+d(y, w) = d(w, z)+d(y, w) = d(y, z).
\]
\end{proof}

\section{Convexity of metric spaces}
\label{sec convex space}

Before studying the convexity preserving properties for the Hamilton-Jacobi equations,
let us review assumptions on convexity of the metric in the sense of Busemann \cite{Jbook, Pa}.


\begin{defi}[Busemann spaces]
A geodesic space $(\X, d)$ is called a \emph{Busemann space}
if for any $x, y, y' \in \X$ we have
\begin{equation}
\label{eq busemann ineq}
2 d(z, z') \le d(y, y') \quad \text{for all $z\in M(x, y)$ and $z'\in M(x, y')$.}
\end{equation}
\end{defi}

\begin{defi}[Busemann NPC spaces]\label{def npc space}
A geodesic space $(\X, d)$ is called a \emph{nonpositively curved space in the sense of Busemann} (\emph{Busemann NPC space} for short)
if for every $p \in \X$ there exists $\delta > 0$ such that for any $x, y, y' \in B_{\delta}(p)$ the inequality \eqref{eq busemann ineq} holds.
\end{defi}

It is known (cf. \cite[Corollary 2.3.1]{Jbook}) that any Alexandrov NPC space is a Busemann space; see for example \cite{St} for an introduction of Alexandrov NPC spaces and their properties. 

For our convenience of applications, we consider a stronger version of Busemann NPC spaces in the sense that we further require the size of $\delta>0$ in Definition \ref{def npc space} be independent of the location of $p$.

\begin{defi}[Uniform Busemann NPC spaces]\label{weak busemann}
We say that a geodesic space $(\X, d)$ is a \emph{uniform Busemann NPC space}
if there exists $\delta > 0$ such that for any $x, x', y \in B_\delta(p)$ with some $p \in \X$ the inequality \eqref{eq busemann ineq} holds.
\end{defi}



\begin{rmk}
Considering the specific case $y' = y$ in \eqref{eq busemann ineq}, we have $d(z, z') = 0$ for any $z, z' \in M(x, y)$,
which shows that $M(x, y)$ is singleton.
In particular, every midpoint set $M(x, y)$ of a Busemann space consists of only one point.
In the same way, we see that geodesics joining any $x$ and $y$ on a uniform Busemann NPC space $\X$ are unique
provided $d(x, y) \le \delta$ with the constant $\delta$ appearing in Definition \ref{weak busemann}.
\end{rmk}




\begin{example}
\label{ex busemann}
(1)
The Euclidean spaces $\R^N$ with $p$-norms ($1 < p < \infty$)
are Busemann spaces.
Similarly, the Lebesgue spaces $L^p(\R^N)$ ($1 < p < \infty$) are Busemann spaces.



(2)
Hyperbolic spaces are Busemann spaces. 


(3)
Also, any tree or, more generally, any Euclidean Bruhat-Tits building is a Busemann space. 
\end{example}

\begin{example}
\label{rmk heisenberg}
The first Heisenberg group $\mathbb{H}$ with the Carnot-Carath\'eodory metric is neither a Busemann space nor a Busemann NPC space, since it is not a uniquely geodesic space even locally; it is well known that there are infinitely many geodesics joining every pair of points $(x, y, z_1), (x, y, z_2)\in \H$ for any $x, y, z_1, z_2\in \R$ with $z_1\neq z_2$.

\end{example}

It is obvious that Busemann spaces are always uniform Busemann NPC spaces
but the reverse is not true in general.
For example, the flat torus $\T = \R/\Z$ is not a Busemann space since there exist two geodesics between the points $0$ and $1/2$, while it is a uniform Busemann NPC space.
Such an example of metric spaces can be easily modified to be unbounded, as described in the following example. 

\begin{example}
\label{ex npc only}
As discussed in \cite[Example 8.3]{St}, surfaces of revolution
\[
\{(x_1, x_2, x_3)\in \R^3 \mid x_1^2+x_2^2=\varphi^2(x_3), x_3\in I\} \quad \text{with an interval $I\subset \R$}
\]
are locally NPC spaces (in both Alexandrov sense and Busemann sense) when $\varphi: I\to \Rp$ is convex, since their sectional curvatures are nonnegative. In what follows we present a detailed verification, using Definitions \ref{def npc space} and \ref{weak busemann}, in the special case of cylinders. 
Consider the cylinder
\[
\X = \S^1\times \R = \{ (x_1, x_2, x_3) \in \R^3 \mid x_1^2+x_2^2 = 1 \}
\]
equipped with the intrinsic metric $d$ given by
\begin{equation}
\label{cylinder metric}
d((\theta_x, x_3), (\theta_y, y_3))
= \min_{n \in \Z}\sqrt{(\theta_y-\theta_x+2\pi n)^2+(y_3-x_3)^2}.
\end{equation}
Here any $x \in \X$ with coordinates $(x_1, x_2, x_3) \in \R^3$ are identified by cylindrical coordinates  $(\theta_x, x_3) \in \R\times \R$;
in other words, we keep the third coordinate $x_3$ and let $\cos(\theta_x) = x_1$, $\sin(\theta_x) = x_2$.
The midpoint set $M((\theta_x, x_3), (\theta_y, y_3))$ is all points represented by
\[
\left({\theta_x+\theta_y+2\pi n \over 2}, {x_3+y_3 \over 2}\right)
\]
with the minimizers $n$ in \eqref{cylinder metric}.
For instance, when $x = (0, 0)$ and $y = (\pi, 0)$, the minimizers are $n = 0, -1$ and so
\[
M((0, 0), (\pi, 0)) = \{ (\pi/2, 0), (3\pi/2, 0) \}.
\]
In particular, $(\S^1\times \R, d)$ is not a Busemann space.

Meanwhile, for any $x = (\theta_x, x_3), y = (\theta_y, y_3), y' = (\theta_{y'}, y'_3) \in \S^1\times \R$ satisfying $d(x, y), d(x, y') \le \pi/2$,
we see that
\[
\min_{n \in \Z}|\theta_y-\theta_x+2\pi n| \le \pi/2,
\quad \min_{n \in \Z}|\theta_{y'}-\theta_x+2\pi n| \le \pi/2
\]
and hence
\[
\theta_x-\pi/2 \le \theta_y+2\pi n \le \theta_x+\pi/2,\
\quad \theta_x-\pi/2 \le \theta_{y'}+2\pi n' \le \theta_x+\pi/2
\]
for some unique $n, n' \in \Z$.
Moreover, $z := ((\theta_x+\theta_y+2\pi n)/2, (x_3+y_3)/2)$ and $z' := ((\theta_x+\theta_{y'}+2\pi n)/2, (x_3+y_3)/2)$ are the unique element of $M(x, y)$ and $M(x, y')$, respectively.
Without loss of generality, we may assume $n, n' = 0$.
Then, since $|\theta_y-\theta_{y'}| \le \pi$, we see that
\[
d(y, y') = \sqrt{(\theta_{y'}-\theta_y)^2+(y'_3-y'_3)^2},
\quad d(z, z') = \sqrt{\left({\theta_{y'}-\theta_y \over 2}\right)^2+\left({y'_3-y'_3 \over 2}\right)^2},
\]
which shows $2 d(z, z') = d(y, y')$ for all $z \in M(x, y)$ and $z' \in M(x, y')$.
Therefore, we see that $(\S^1\times \R, d)$ is a uniform Busemann NPC space.
\end{example}

The condition \eqref{eq busemann ineq} is equivalent to the one involving four points which has been introduced as \eqref{eq busemann ineq 4}.

\begin{prop}
\label{thm busemann 3and4}
Let $(\X, d)$ be a geodesic space and let $B$ be a subset of $X$.
Then, \eqref{eq busemann ineq} holds for any $x, y, y'\in B$
if and only if \eqref{eq busemann ineq 4} holds for any $x, x', y, y' \in B$.
\end{prop}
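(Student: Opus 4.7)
The plan is to prove the two implications separately, with the forward direction \eqref{eq busemann ineq 4} $\Rightarrow$ \eqref{eq busemann ineq} being essentially a specialization and the reverse direction being the substantive one, handled via an intermediate midpoint.

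For the easy direction, I would fix $x, y, y' \in B$ and apply \eqref{eq busemann ineq 4} to the quadruple $(x, x', y, y') := (x, x, y, y')$. Then $d(x, x') = 0$ and the midpoint set $M(x', y') = M(x, y')$, so the four-point inequality collapses precisely to $2d(z, z') \le d(y, y')$ for every $z \in M(x, y)$ and $z' \in M(x, y')$, which is \eqref{eq busemann ineq}.

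For the main direction, assume \eqref{eq busemann ineq} holds on $B$ and fix $x, x', y, y' \in B$ together with $z \in M(x, y)$ and $z' \in M(x', y')$. The idea is to insert a bridging midpoint $w \in M(x, y')$, whose existence is guaranteed because $(\X, d)$ is geodesic. Then I would apply \eqref{eq busemann ineq} twice: first to the triple $(x, y, y')$, which lies in $B$, to obtain
\[
2 d(z, w) \le d(y, y'),
\]
and second to the triple $(y', x, x')$, which also lies in $B$, with the roles of ``basepoint'' and the other two vertices shifted, yielding
\[
2 d(w, z') \le d(x, x'),
\]
since $w \in M(y', x)$ and $z' \in M(y', x')$. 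Adding the two estimates and combining with the triangle inequality $d(z, z') \le d(z, w)+d(w, z')$ produces \eqref{eq busemann ineq 4}.

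The only delicate point I anticipate is making sure that each invocation of \eqref{eq busemann ineq} has all three of its designated basepoints in $B$, while the bridging midpoint $w$ need only lie in $\X$; this is fine because the hypothesis places no restriction on where the midpoints themselves live. No compactness or uniqueness of midpoints is needed, so the argument works verbatim when $M(x, y')$ fails to be a singleton — any choice of $w$ will do. I do not expect any substantial obstacle beyond keeping this bookkeeping straight.
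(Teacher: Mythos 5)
Your proposal is correct and follows essentially the same route as the paper: the reverse implication is dispatched by specializing $x'=x$, and the forward implication uses the same bridging midpoint $w \in M(x, y')$ with two applications of \eqref{eq busemann ineq} (to the triples based at $x$ and at $y'$) followed by the triangle inequality. Your explicit identification of the basepoint in the second application, and your remark that $w$ need not lie in $B$, are careful touches but do not change the argument.
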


\begin{proof}
The implication ``$\eqref{eq busemann ineq 4}\Rightarrow\eqref{eq busemann ineq}$'' is trivial.

We prove the opposite implication.
Fix arbitrary $z \in M(x, y)$, $z' \in M(x', y')$ and $w \in M(x, y')$.
Now, applying the inequality \eqref{eq busemann ineq} to the triples $(x, y, y')$ and $(x, x', y')$
we can get
\[
2 d(z, w) \le d(y, y'),
\quad 2 d(w, z') \le d(x, x').
\]
Therefore, we obtain
\[
2 d(z, z') \le 2(d(z, w)+d(w, z')) \le d(x, x')+d(y, y'),
\]
which shows \eqref{eq busemann ineq 4}.

\end{proof}

\section{Metric viscosity solutions and Hopf-Lax formula}\label{sec viscosity}

This section is devoted to a review of the notion of metric viscosity solutions and Hopf-Lax formula to the Hamilton-Jacobi equation \eqref{hj1}.
For more details we refer the reader to \cite{GaS2, GaS}.

Let $(\X, d)$ be a complete geodesic space and $x_0 \in X$ be a fixed point.
Since a part of our study relies on the Hopf-Lax formula, for the moment we impose the following assumptions on $H$: 
\begin{enumerate}
\item[(H1)]
$H \in C(\Rp)$ and $H(0) = 0$.
\item[(H2)]
$H$ is non-decreasing convex in $\Rp$.
\item[(H3)]
$H$ is coercive; that is 
\begin{equation}
\label{coercive}
{H(p) \over p}\to \infty \quad \text{as $p\to \infty$.}
\end{equation}
\end{enumerate}
Assume $u_0$ is Lipschitz continuous; in other words, $|u_0(x)-u_0(y)| \le K d(x, y)$ for all $x, y \in \X$ with some $K < \infty$.
In particular, it follows that $|u_0(x)| \le C (d(x_0, x)+1)$ for some $C < \infty$.


We denote by $\mathcal{C}$ a set of locally Lipschitz continuous functions $u$ on $\X\times(0, \infty)$ such that $\partial_{t}u$ is continuous,
and introduce the sets of smooth functions on metric spaces as below:
\[
\begin{aligned}
\overline{\mathcal{C}} &:= \{ u \in \mathcal{C} \mid \text{$|\nabla^+ u| = |\nabla u|$ and $|\nabla u|$ is continuous} \}, \\
\underline{\mathcal{C}} &:= \{ u \in \mathcal{C} \mid \text{$|\nabla^- u| = |\nabla u|$ and $|\nabla u|$ is continuous} \}, \\
\end{aligned}
\]
where
\[
|\nabla^\pm u|(x, t) := \limsup_{y \to x} \frac{[u(y, t)-u(x, t)]_\pm}{d(x, y)}
\]
with $[\cdot]_+:=\max\{\cdot,0\}$ and $[\cdot]_-:=-\min\{\cdot,0\}$.


\begin{defi}[Metric viscosity solutions]
An upper semicontinuous function $u$ on $\X\times(0, \infty)$ is said to be a \emph{metric viscosity subsolution} of \eqref{hj1}
when for every $\psi = \psi_1+\psi_2$ with $\psi_1 \in \underline{\mathcal{C}}$ and $\psi_2 \in \mathcal{C}$,
if $u-\psi$ attains a local maximum at a point $(x, t) \in \X\times(0, \infty)$,
then
\[
\partial_t\psi(x, t)+H_{|\nabla \psi_2|^*(x, t)}(|\nabla \psi_1|(x, t)) \le 0,
\]
where $H_a(p) = \inf_{|p'-p| \le a}H(p')$ for $a \ge 0$
and $|\nabla \psi_2|^*(x, t) = \limsup_{(x', t') \to (x, t)}|\nabla \psi_2|(x', t')$.

Similarly,
a lower semicontinuous function $u$ on $\X\times(0, \infty)$ is to be said a \emph{metric viscosity supersolution} of \eqref{hj1}
when for every $\psi = \psi_1+\psi_2$ with $\psi_1 \in \overline{\mathcal{C}}$ and $\psi_2 \in \mathcal{C}$,
if $u-\psi$ attains a local minimum at a point $(x, t) \in \X\times(0, \infty)$,
then
\[
\partial_t\psi(x, t)+H^{|\nabla \psi_2|^*(x, t)}(|\nabla \psi_1|(x, t)) \ge 0,
\]
where $H^a(p) = \sup_{|p'-p| \le a}H(p')$ for $a \ge 0$.

We say that $u$ is a \emph{metric viscosity solution} of \eqref{hj1}
if $u$ is both a metric viscosity sub- and supersolution of \eqref{hj1}.
\end{defi}

\begin{thm}[{\cite[Section 7]{GaS2}}]
\label{thm solhj}
Suppose that $(\X, d)$ is a complete geodesic space. Assume that $H$ satisfies (H1) and that $u_0$ is Lipschitz continuous.
Then, there exists a unique metric viscosity solution $u$ of (HJ) with the $C(\X\times[0, \infty))$ regularity and the growth condition
\begin{equation}
\label{eq growth}
|u(x, t)| \le C(d(x_0, x)+t+1)
\end{equation}
for all $(x, t)\in \X\times[0, \infty)$ and some $C < \infty$.
Moreover, if (H2) and (H3) hold,
then the solution $u$ is given by the Hopf-Lax formula \eqref{eq hl}
with the function $L \colon \Rp \to \R$ defined by
\begin{equation}\label{lagrangian}
L(v) = \sup_{p \in \Rp}(p v-H(p)).
\end{equation}
\end{thm}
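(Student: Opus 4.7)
The plan is to split the statement into two parts. Under (H1) alone the goal is existence, uniqueness and continuity of a metric viscosity solution satisfying (\ref{eq growth}); under (H2) and (H3) the goal is to identify this solution with the Hopf-Lax formula (\ref{eq hl}).

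For the first part I would begin with a comparison principle for metric viscosity sub- and supersolutions of (HJ) that satisfy the growth condition (\ref{eq growth}). Fixing $T>0$, I would take a subsolution $u$ and a supersolution $v$, assume $\sup_{\X\times[0,T]}(u-v)>0$ and run the standard doubling-of-variables argument with penalty
\[
\Phi_\vep(x,y,t) = u(x,t) - v(y,t) - \frac{d(x,y)^2}{2\vep} - \frac{\sigma}{T-t} - \eta\, d(x_0,x)^2
\]
for small $\sigma,\eta>0$. The linear growth lets me localise: the supremum is attained at some interior $(\hat x,\hat y,\hat t)$ with $d(\hat x,\hat y)^2/\vep$ bounded and $d(\hat x,\hat y)\to 0$ along a subsequence. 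The essential point, which is what distinguishes the metric setting from the Euclidean one, is that $d(\cdot,\cdot)^2$ is not a classical smooth test function; instead one exploits the splitting $\psi=\psi_1+\psi_2$ in the definition of metric viscosity solutions, placing the squared distance in $\overline{\mathcal{C}}$ on the supersolution side and in $\underline{\mathcal{C}}$ on the subsolution side thanks to the slope identity $|\nabla_x d(x,y)|=1$ for $x\ne y$. Written with the relaxed Hamiltonians $H_a$ and $H^a$, the two inequalities can then be subtracted and the limits $\vep\to 0$, $\sigma,\eta\to 0$ yield the contradiction. Existence then follows from Perron's method in its metric incarnation, with barriers $\pm(u_0(x)+Ct)$ chosen so as to enforce (\ref{eq growth}).

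For the second part, define $u$ by (\ref{eq hl}). The hypotheses (H2), (H3) and $H(0)=0$ make $L$ proper convex on $\Rp$ with $L(0)=-H(0)=0$ and $L(v)/v\to\infty$ as $v\to\infty$. Combined with the Lipschitz bound $|u_0(a)|\le|u_0(x_0)|+K\,d(x_0,a)$, this forces the infimum to be attained on a ball $\{a:d(a,x)\le Rt\}$ with $R$ depending only on $K$, which yields joint continuity and the growth bound (\ref{eq growth}). The initial trace $u(x,t)\to u_0(x)$ as $t\to 0^+$ follows since $t L(d(a,x)/t)$ superlinearly penalises $a\ne x$ while $u_0$ is Lipschitz. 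Next, using that $(\X,d)$ is geodesic (so geodesics from near-minimisers of (HL) can be spliced), I would prove the dynamic programming principle
\[
u(x,t+h) = \inf_{y\in\X}\left\{u(y,t) + h\, L\!\left(\frac{d(y,x)}{h}\right)\right\}.
\]
From this the viscosity inequalities follow in the standard way: choosing $y=x$ in DPP and testing against $\psi$ that touches $u$ from above gives, after optimising over the speeds $v\ge 0$, the subsolution inequality $\partial_t\psi+H(|\nabla\psi|)\le 0$; choosing instead $y$ at distance $hv$ from $x$ along a geodesic produces the supersolution inequality, both modulo the $H_a,H^a$ relaxation. Uniqueness from the first part then identifies this $u$ with the unique metric viscosity solution.

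The main obstacle, and what forces one to build the whole theory of metric viscosity solutions, is the comparison principle. In a Euclidean background doubling of variables is routine, but in a bare geodesic space one cannot simultaneously treat $d(x,y)^2$ as a smooth test function in both variables; the asymmetric definition with $\psi=\psi_1+\psi_2$ and the relaxed Hamiltonians is engineered precisely to accommodate this, and checking that the squared distance lands in $\overline{\mathcal{C}}$ or $\underline{\mathcal{C}}$ as needed (via $|\nabla d(\cdot,y)|=1$ away from $y$) is the most delicate bookkeeping. Everything in Part 2 is then relatively soft once DPP is in hand.
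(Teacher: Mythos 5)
First, a point of order: the paper itself does not prove Theorem \ref{thm solhj} --- it is imported wholesale from \cite[Section 7]{GaS2}, so there is no in-paper proof to compare against. Your roadmap follows the standard Crandall--Lions program transplanted to geodesic spaces, which is broadly the route of the reference, and you correctly identify the $\psi=\psi_1+\psi_2$ splitting with the relaxed Hamiltonians $H_a$, $H^a$ as the device that compensates for the non-smoothness of $d(\cdot,y)^2$ (indeed $d(\cdot,y)^2\in\underline{\mathcal{C}}$ because one can always move \emph{toward} $y$ in a geodesic space, so $|\nabla^-|=|\nabla|$ for it).

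Nevertheless, as a proof the proposal has concrete gaps. (1) In the doubling argument you assert that the penalized supremum is attained at some interior $(\hat x,\hat y,\hat t)$. In a complete geodesic space closed bounded sets need not be compact, so attainment fails in general; the argument must be run through Ekeland's variational principle (exactly as this paper does in its own proof of Theorem \ref{thm convexity2}), and the additional Lipschitz perturbation this introduces has to be absorbed into $H_a$ and $H^a$. (2) The existence step ``Perron's method in its metric incarnation, with barriers $\pm(u_0(x)+Ct)$'' is asserted rather than argued: $u_0$ is merely Lipschitz, so even verifying that these barriers are metric sub/supersolutions requires the test-function machinery, and proving that the upper envelope of subsolutions is a supersolution under the $H^a$ relaxation is the genuinely nontrivial half of Perron in this setting. (3) In passing from the dynamic programming principle to the viscosity inequalities you have the two directions interchanged: choosing an arbitrary $y$ at distance $hv$ along a geodesic and optimizing over $v$ and the direction yields the \emph{sub}solution inequality $\partial_t\psi+\sup_{v\ge 0}\left(v|\nabla\psi|-L(v)\right)\le 0$, because the DPP gives an upper bound on $u(x,t+h)$; the \emph{super}solution inequality instead requires taking near-minimizers $y_h$ of the DPP, controlling $d(y_h,x)/h$ via the finite-speed bound of Proposition \ref{thm finiteprop}, and passing to the limit. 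Moreover ``choosing $y=x$'' forces $v=0$ and cannot then be ``optimised over speeds.'' All of this is repairable along standard lines, but as written the proposal is a plan rather than a proof.
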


\begin{rmk}
The function $L: \Rp\to \R$ is increasing continuous convex, $L(0) = 0$ and satisfies the coercivity
\begin{equation}
\label{eq suplinear l}
{L(v)\over v} \to \infty\quad \text{as $v \to \infty$.}
\end{equation}
\end{rmk}

Let us give a few properties of Hopf-Lax formula \eqref{eq hl} that are needed later.

\begin{prop}[Finite speed of propagation]
\label{thm finiteprop}
Suppose that $(\X, d)$ is a complete geodesic space. 
Let $H$ satisfy (H1)--(H3) and $L$ be given as in \eqref{lagrangian}.
Assume that $u_0$ is a $K$-Lipschitz continuous function.
Then, the function $u$ defined by \eqref{eq hl} satisfies
\begin{equation}
\label{eq hl2}
u(x, t) = \inf_{a \in B_{V t}}\left\{ u_0(a)+t L\left({d(a, x) \over t}\right) \right\} \quad \text{for all $(x, t) \in \X\times(0, \infty)$}
\end{equation}
with some $V \in \Rp$ depending only on the Lipschitz constant $K$ of $u_0$ and the Lagrangian $L$.
\end{prop}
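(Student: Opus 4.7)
The plan is to exploit two ingredients: the superlinear growth \eqref{eq suplinear l} of the Lagrangian $L$, and the $K$-Lipschitz continuity of $u_0$. Heuristically, any $a$ sufficiently far from $x$ cannot be a serious candidate for the infimum in \eqref{eq hl}, because the penalty $t L(d(a,x)/t)$ eventually dominates the Lipschitz gain $K d(a, x)$ that could be extracted from $u_0$. So my goal is to identify a radius $Vt$, linear in $t$, outside of which the candidate values are bounded below by a trivial choice inside the ball.

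First, I would observe that taking $a = x$ in \eqref{eq hl} together with $L(0) = 0$ yields the upper bound $u(x, t) \le u_0(x)$; this is the benchmark one needs to beat. Next, using the superlinearity \eqref{eq suplinear l} of $L$, I would select $V = V(K, L) \ge 0$ so that $L(v) \ge Kv$ whenever $v \ge V$. Then for any $a \in \X$ with $d(a, x) \ge Vt$, setting $v = d(a,x)/t \ge V$, the $K$-Lipschitz estimate $u_0(a) \ge u_0(x) - K d(a, x)$ combined with $t L(d(a,x)/t) \ge K d(a, x)$ gives
\[
u_0(a) + t L\!\left(\frac{d(a, x)}{t}\right) \ge u_0(x) - K d(a, x) + K d(a, x) = u_0(x).
\]

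Since the candidate $a = x$ (lying in $B_{Vt}(x)$) already achieves the value $u_0(x)$ in the infimum, the above shows that no $a$ outside $B_{Vt}(x)$ can strictly lower the infimum, so the infimum over $\X$ equals the infimum over $B_{Vt}(x)$, which is precisely \eqref{eq hl2}. I do not anticipate any real obstacle here: the argument is essentially a one-line observation once $V$ has been chosen via superlinearity, and the only care required is to verify that $V$ depends solely on $K$ and $L$ (and not on $x$ or $t$), which is transparent from the derivation.
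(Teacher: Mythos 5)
Your proof is correct and follows essentially the same route as the paper: both arguments use the benchmark $u(x,t)\le u_0(x)$ (from $a=x$ and $L(0)=0$), the $K$-Lipschitz bound on $u_0$, and the superlinearity \eqref{eq suplinear l} to produce $V=V(K,L)$; you phrase it contrapositively (points outside $B_{Vt}(x)$ cannot beat $a=x$) while the paper shows directly that any near-minimizer satisfies $L(v)/v\le K$ and hence $v\le V$, but this is the same observation.
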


\begin{proof}
Since $u(x, t) \le u_0(x)$,
for $a \in \X$ satisfying
\begin{equation}
\label{ineq hl}
u(x, t) \ge u_0(a)+t L\left({d(a, x) \over t}\right)
\end{equation}
we observe that
\[
t L\left({d(a, x) \over t}\right) \le u_0(x)-u_0(a) \le K d(a, x).
\]
Hence, setting $v = d(x, a)/t$ we have $L(v)/v \le K < \infty$.
Now, thanks to the condition \eqref{eq suplinear l} we obtain $V \in \Rp$ determined only by $L$ and $K$ such that $v \le V$, i.e.\ $a \in B_{V t}(x)$.
Note that we have proved that \eqref{ineq hl} implies $a \in B_{V t}(x)$
and therefore we obtain \eqref{eq hl2}.
\end{proof}

In view of the Hopf-Lax formula we can obtain the Lipschitz preserving property below by adapting the standard argument to the current general metric space.

\begin{lem}[Lipschitz preserving property]
\label{lip-preserving}\label{thm lip pre}
Suppose that $(\X, d)$ is a complete geodesic space. 
Assume that $H$ satisfies (H1)--(H3) and that $u_0$ is Lipschitz continuous in $\X$.
Let $u$ be the unique viscosity solution of (HJ).
If $u_0$ is $K$-Lipschitz continuous,
then all $u(\cdot, t)$ are $K$-Lipschitz continuous,
i.e.\
\[
|u(x, t)-u(y, t)| \le K d(x, y)
\]
for all $x, y\in X$ and all $t \ge 0$.
\end{lem}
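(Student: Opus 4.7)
The plan is to exploit the Hopf-Lax representation \eqref{eq hl} provided by Theorem \ref{thm solhj}, which is available under (H1)--(H3). Since the Lipschitz bound at $t=0$ is trivial, fix $t>0$ and arbitrary $x, y \in \X$. By symmetry, it suffices to show that $u(x,t) - u(y,t) \le K\, d(x,y)$. Given $\varepsilon>0$, select a near-minimizer $b \in \X$ for $u(y,t)$, so that
\[
u(y,t) \ge u_0(b) + t L\!\left(\frac{d(b,y)}{t}\right) - \varepsilon.
\]
The natural instinct from the Euclidean setting would be to ``translate'' $b$ by $y - x$ to produce a competitor for $u(x,t)$, but no such translation exists in a general geodesic space.

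The crucial substitute for translation is Proposition \ref{thm separation}. Applying it to the triple $(x, y, b)$, I obtain a point $a \in \X$ satisfying
\[
d(a, x) \le d(b, y) \qquad \text{and} \qquad d(a, b) \le d(x, y).
\]
This $a$ will serve as a competitor for $u(x,t)$ in the Hopf-Lax infimum. The first inequality ensures that the Lagrangian term at $a$ is controlled by the one at $b$ (using monotonicity of $L$), while the second inequality will convert the $u_0$-difference into a multiple of $d(x,y)$ via the Lipschitz assumption on $u_0$.

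Plugging $a$ into the Hopf-Lax formula, and using that $L$ is non-decreasing on $\Rp$ together with $d(a,x) \le d(b,y)$,
\[
u(x,t) \le u_0(a) + t L\!\left(\frac{d(a,x)}{t}\right) \le u_0(a) + t L\!\left(\frac{d(b,y)}{t}\right).
\]
Subtracting the lower bound for $u(y,t)$ and invoking $K$-Lipschitz continuity of $u_0$ together with $d(a,b) \le d(x,y)$,
\[
u(x,t) - u(y,t) \le u_0(a) - u_0(b) + \varepsilon \le K\, d(a,b) + \varepsilon \le K\, d(x,y) + \varepsilon.
\]
Letting $\varepsilon \to 0$ and exchanging the roles of $x$ and $y$ yields the claim.

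The only step requiring real care is producing the competitor $a$, since in a nonlinear geodesic setting one cannot simply ``shift'' $b$; this is exactly what Proposition \ref{thm separation} was designed to furnish, so the main work of the lemma is essentially that proposition. Everything else reduces to monotonicity of $L$ and the Lipschitz hypothesis on $u_0$.
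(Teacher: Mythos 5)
Your proposal is correct and follows essentially the same route as the paper's own proof: a near-minimizer $b$ for $u(y,t)$ in the Hopf--Lax formula, a competitor $a$ for $u(x,t)$ supplied by Proposition \ref{thm separation} with $d(a,x)\le d(b,y)$ and $d(a,b)\le d(x,y)$, and then the monotonicity of $L$ plus the $K$-Lipschitz bound on $u_0$. No gaps.
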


\begin{proof}
For fixed $x, y \in \X$ and $\vep > 0$ take $b \in \X$ such that
\begin{equation}
\label{eq lip pre 1}
u(y, t) \ge u_0(b)+t L\left({d(b, y) \over t}\right)-\vep.
\end{equation}
Since $(\X, d)$ is a geodesic space,
we see by Proposition \ref{thm separation} that there is $a \in \X$ satisfying
\begin{equation}
\label{eq lip pre 3}
d(a, b) \le d(x, y), \quad d(a, x) \le d(b, y).
\end{equation}
Note that
\begin{equation}
\label{eq lip pre 2}
u(x, t) \le u_0(a)+t L\left({d(a, x) \over t}\right).
\end{equation}
Subtracting \eqref{eq lip pre 1} from \eqref{eq lip pre 2},
we have
\[
u(x, t)-u(y, t) \le u_0(a)+t L\left({d(a, x) \over t}\right)-u_0(b)-t L\left({d(b, y) \over t}\right)+\vep.
\]
Since $u_0$ is $K$-Lipschitz continuous and $L$ is non-decreasing,
we see by \eqref{eq lip pre 3} that
\[
u(x, t)-u(y, t) \le K d(x, y)+\vep,
\]
which completes the proof
because $x, y, \vep$ are arbitrary.
\end{proof}



 

The assumption (H3) on superlinear growth of $H$ guarantees that the function $L$ given by \eqref{lagrangian} is real-valued continuous.
On the other hand, it is known that a similar formula based on optimal control holds for the case when $H$ has linear growth at infinity
(see for example \cite{BC} on the control-based interpretation of Hamilton-Jacobi equations in the Euclidean spaces).
We here present such a representation formula on metric spaces in the simplest case.

\begin{thm}[Hopf-Lax formula for the time-dependent Eikonal equation]
\label{thm hopf-lax1}
Suppose that $(\X, d)$ is a complete geodesic space. Let $u_0$ be a Lipschitz continuous function on $\X$.
Then, the function $u$ given by
\begin{equation}
\label{eq hl1}
u(x, t) = \inf_{a \in B_t(x)} u_0(a)
\end{equation}
is the unique metric viscosity solution of
\begin{equation}
\label{c}
\partial_t u+|\nabla u|=0 \quad \text{in $\X\times (0, \infty)$}
\end{equation}
with initial condition \eqref{hj2}.
\end{thm}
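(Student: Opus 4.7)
The approach is to reduce to Theorem~\ref{thm solhj} via a coercive approximation of the linear Hamiltonian $H(p) = p$. For each $\varepsilon > 0$ set $H_\varepsilon(p) := p + \varepsilon p^2$, which satisfies (H1)--(H3). A direct computation of its Legendre transform yields
$$
L_\varepsilon(v) = \sup_{p \ge 0}(pv - p - \varepsilon p^2)
= \begin{cases} 0, & v \le 1, \\ (v-1)^2/(4\varepsilon), & v > 1. \end{cases}
$$
By Theorem~\ref{thm solhj} the unique metric viscosity solution $u_\varepsilon$ of $\partial_t w + H_\varepsilon(|\nabla w|) = 0$ with $w(\cdot, 0) = u_0$ is given by the Hopf--Lax formula $u_\varepsilon(x, t) = \inf_{a \in \X}\{u_0(a) + tL_\varepsilon(d(a, x)/t)\}$. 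On the other hand, since $H(p) = p$ satisfies (H1), Theorem~\ref{thm solhj} furnishes a unique metric viscosity solution $v$ of \eqref{c} with $v(\cdot, 0) = u_0$. Hence it suffices to identify the function $u$ defined by \eqref{eq hl1} with $v$.

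The first step is to prove $u_\varepsilon \to u$ locally uniformly as $\varepsilon \to 0$. The inequality $u_\varepsilon \le u$ is immediate, since any $a \in B_t(x)$ satisfies $L_\varepsilon(d(a, x)/t) = 0$ so that $u_\varepsilon(x, t) \le u_0(a)$. For the reverse, pick $a_\varepsilon$ nearly attaining the infimum defining $u_\varepsilon(x, t)$. The trivial bound $u_\varepsilon(x, t) \le u_0(x)$ combined with the $K$-Lipschitz continuity of $u_0$ gives
$$
tL_\varepsilon(d(a_\varepsilon, x)/t) \le Kd(a_\varepsilon, x) + o_\varepsilon(1),
$$
and the explicit formula for $L_\varepsilon$ then forces $d(a_\varepsilon, x) \le t + O(\sqrt{\varepsilon})$. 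Projecting $a_\varepsilon$ onto $B_t(x)$ along any geodesic from $x$ to $a_\varepsilon$ (which exists by the geodesic assumption on $\X$) and invoking the $K$-Lipschitz continuity of $u_0$ yields $\liminf_{\varepsilon \to 0} u_\varepsilon(x, t) \ge u(x, t)$. Since $u$ is itself Lipschitz in both variables (by a direct adaptation of the proof of Lemma~\ref{thm lip pre} using Proposition~\ref{thm separation}) and the $u_\varepsilon$ are equi-Lipschitz, the pointwise convergence upgrades to local uniform convergence.

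Next I would invoke the stability of metric viscosity solutions under locally uniform convergence of the Hamiltonian: since $H_\varepsilon \to H$ locally uniformly on $\Rp$ and $u_\varepsilon \to u$ locally uniformly, a half-relaxed-limit argument adapted to the definition of metric viscosity solutions in Section~\ref{sec viscosity} (see \cite{GaS2}) shows that $u$ is a metric viscosity solution of \eqref{c} with initial data $u_0$. Uniqueness in Theorem~\ref{thm solhj} then identifies $u = v$, which completes the proof.

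The main obstacle is the localization of the minimizers $a_\varepsilon$: a priori nothing confines them to a neighborhood of $B_t(x)$, and one must carefully balance the growth of the penalty $L_\varepsilon$ outside $B_t(x)$ against the linear growth of $u_0$ to see that these compete in a way that forces $d(a_\varepsilon, x) - t \to 0$ as $\varepsilon \to 0$. A secondary technical point is verifying that the regularizations $H_\varepsilon^a$ and $(H_\varepsilon)_a$ appearing in the definition of metric viscosity solution converge uniformly on compact sets to $H^a$ and $H_a$, respectively; this is routine given $H_\varepsilon \to H$ locally uniformly but should be spelled out.
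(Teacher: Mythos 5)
Your proof is correct and follows essentially the same strategy as the paper's Appendix: approximate the linear Hamiltonian by a coercive one satisfying (H1)--(H3), apply Theorem~\ref{thm solhj} and the Hopf--Lax formula to the approximation, show locally uniform convergence to the function in \eqref{eq hl1} by localizing the near-minimizers via the competition between the Lagrangian penalty and the Lipschitz growth of $u_0$, and conclude by stability of metric viscosity solutions. The only difference is cosmetic: you take $H_\varepsilon(p)=p+\varepsilon p^2$ where the paper takes $p^\alpha/\alpha$, which makes your Lagrangian vanish on $[0,1]$ and yields the one-sided bound $u_\varepsilon\le u$ exactly rather than up to an $O(\alpha-1)$ error.
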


We leave the detailed proof of this result in the Appendix.

\section{Convexity preserving properties}
\label{sec main}

In this section we give our main results on convexity preserving properties of the equations (HJ) and (HJ2).

\subsection{A metric/Lagrangian approach to geodesic convexity preserving}

\begin{thm}[Preservation of geodesic convexity in uniform Busemann NPC spaces]
\label{thm convexity1}
Let $(\X, d)$ be a complete uniform Busemann NPC space.
Assume that $H$ satisfies (H1)--(H3) and that $u_0$ is Lipschitz continuous in $\X$.
Let $u$ be the unique viscosity solution of (HJ).
If $u_0$ is weakly geodesically convex,
then $u(\cdot, t)$ is also weakly geodesically convex for all $t \ge 0$.
\end{thm}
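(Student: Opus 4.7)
The plan is to combine the Hopf-Lax representation \eqref{eq hl} with the semigroup property of the unique viscosity solution and the localization Lemma \ref{local convexity}, reducing the global claim to a one-step local estimate that can be closed using the uniform NPC property. The formal sketch given in the introduction already handles the genuine Busemann case by a direct application of Hopf-Lax; in the uniform NPC setting the obstruction is that the points $a,b,x,y$ need to lie in a common $\delta$-ball for the four-point Busemann inequality, which cannot be guaranteed in one step when $t$ is large, so the Hopf-Lax step must be broken into sufficiently short time intervals via the semigroup.

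Let $K$ be the Lipschitz constant of $u_0$, which by Lemma \ref{lip-preserving} is also a Lipschitz constant of $u(\cdot,t)$ for every $t\ge 0$, and let $V$ be the propagation speed from Proposition \ref{thm finiteprop} (depending only on $K$ and $L$). Let $\delta>0$ be the NPC radius from Definition \ref{weak busemann}, and fix parameters $s>0$ and $\delta_0>0$ with $2Vs+\delta_0\le\delta$. The central claim is that the Hopf-Lax operator
\begin{equation*}
(S_s w)(x):=\inf_{a\in\X}\bigl\{w(a)+sL(d(a,x)/s)\bigr\}
\end{equation*}
maps $K$-Lipschitz weakly geodesically convex functions to weakly geodesically convex functions. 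By Lemma \ref{local convexity} it suffices to verify the weak convexity inequality for pairs $x,y$ with $d(x,y)\le\delta_0$, in which case the midpoint $z:=m(x,y)$ is unique by the uniform NPC property. For such $x,y$ and $\epsilon>0$, Proposition \ref{thm finiteprop} produces approximate minimizers $a\in B_{Vs}(x)$ and $b\in B_{Vs}(y)$ realizing $(S_sw)(x)$ and $(S_sw)(y)$ up to $\epsilon$. The choice of $s,\delta_0$ forces $a,b,x,y$ into a common ball of radius $\delta$, so the midpoint $c:=m(a,b)$ is unique and the four-point Busemann inequality (Proposition \ref{thm busemann 3and4}) gives $2d(c,z)\le d(a,x)+d(b,y)$. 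The weak geodesic convexity of $w$ (with unique midpoint) yields $2w(c)\le w(a)+w(b)$, while the convexity and monotonicity of $L$ coming from (H2) give the Jensen-type bound $L(d(c,z)/s)\le\tfrac12\bigl(L(d(a,x)/s)+L(d(b,y)/s)\bigr)$. Using $c$ as a test point in the Hopf-Lax definition of $(S_sw)(z)$ and combining these estimates,
\begin{equation*}
2(S_s w)(z)\le (S_sw)(x)+(S_sw)(y)+2\epsilon,
\end{equation*}
and sending $\epsilon\to 0$ settles the local claim.

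To conclude, I would invoke the semigroup property $u(\cdot,t+s)=S_s u(\cdot,t)$, which follows from the uniqueness of the viscosity solution in Theorem \ref{thm solhj} together with \eqref{eq hl}, split $[0,t]$ into $n$ pieces with $t/n\le s$, and apply the one-step claim inductively starting from $u_0$. Lipschitz continuity with constant $K$ is preserved through each step by Lemma \ref{lip-preserving}, so the propagation speed $V$ and the compatibility condition $2V(t/n)+\delta_0\le\delta$ remain valid throughout the induction. The main obstacle is precisely the local one-step estimate: one must ensure that the approximate Hopf-Lax minimizers $a,b$ and the endpoints $x,y$ all fit inside a single $\delta$-ball so that the local Busemann inequality applies, which is why the single-shot application of Hopf-Lax used in the introductory sketch for genuine Busemann spaces must be replaced by a semigroup splitting into sufficiently short time intervals.
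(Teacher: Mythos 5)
Your proposal is correct and follows essentially the same route as the paper's proof: localize via Lemma \ref{local convexity}, use the finite propagation speed from Proposition \ref{thm finiteprop} to confine the approximate Hopf--Lax minimizers $a,b$ together with $x,y$ in a single $\delta$-ball, apply the four-point Busemann inequality and the convexity of $L$, and then iterate over short time intervals via the dynamic programming (semigroup) property, using the Lipschitz-preserving Lemma \ref{lip-preserving} to keep the speed $V$ uniform. The only cosmetic difference is that you phrase the one-step estimate as a property of the operator $S_s$ with unique midpoints, while the paper works directly with $u(\cdot,t)$ and the infimum over $M(a,b)$; these are equivalent here since midpoints in a uniform Busemann NPC space are unique at the relevant scale.
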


\begin{proof}
Since $(\X, d)$ is a uniform Busemann NPC space, let $\delta > 0$ be the constant that appears in Definition \ref{weak busemann}.
Let $t > 0$ and fix $x, y \in \X$ with $d(x, y) \le \delta/2$.
In view of the Hopf-Lax formula \eqref{eq hl2}, for every $\vep > 0$ there exist $a \in B_{V t}(x)$ and $b \in B_{V t}(y)$ such that
\begin{equation}
\label{eq:hopflax1}
u(x, t) \ge u_0(a)+t L\left({d(a, x) \over t}\right)-\vep,
\quad u(y, t) \ge u_0(b)+t L\left({d(b, y) \over t}\right)-\vep,
\end{equation}
where $V$ is the constant given in Proposition \ref{thm finiteprop}.
Let us restrict ourselves to the case when $t \le t_0 := \delta/2 V > 0$ to guarantee $x, y, a, b \in B_{\delta}(x)$.

Since $u_0$ is weakly geodesically convex,
there exists $c \in M(a, b)$ satisfying
\begin{equation}
\label{eq:convex1}
2 u_0(c) \le u_0(a)+u_0(b)+\vep.
\end{equation}
On the other hand, since $\X$ is a uniform Busemann NPC space, for any $z \in M(x, y)$, we have
\[
2 d(c, z) \le d(a, x)+d(b, y),
\]
which, due to the monotonicity and convexity of $L$, yields that
\begin{equation}
\label{eq:convex2}
2 L\left({d(c, z) \over t}\right)
\le 2 L\left({1 \over 2}{d(a, x) \over t}+{1 \over 2}{d(b, y) \over t}\right)
\le L\left({d(a, x) \over t}\right)+L\left({d(b, y) \over t}\right).
\end{equation}
Then combining \eqref{eq:convex1}, \eqref{eq:convex2} and \eqref{eq:hopflax1},
we obtain
\[
2 u(z, t)
\le 2 u_0(c)+2 t L\left({d(z, c) \over t}\right)
\le u(x, t)+u(y, t)+3\vep.
\]
Since $\vep > 0$ is arbitrary,
$u(\cdot, t)$ is locally weakly geodesically convex 
and therefore it is also weakly geodesically convex for all $0 < t \le t_0$, in view of Lemma \ref{local convexity}.

Now we may repeat our argument above, treating $t_0$ as the initial moment.
An alternative viewpoint is to use the dynamic programming principle 
\[
u(x, t) = \inf_{a \in \X}\left\{u(a, s)+(t-s)L\left({d(a, x) \over t-s}\right)\right\}
\]
for all $x \in \X$ and $0 \le s < t$.
Noticing that the Lipschitz constant for $u(\cdot, t)$ does not depend on $t$ by Proposition \ref{thm lip pre},
we obtain the weak geodesic convexity of $u(\cdot, t)$ for an arbitrary $t > 0$.
The case $t = 0$ is trivial because $u(\cdot, 0) = u_0$.
\end{proof}

\begin{rmk}
Since the key to our proof is the relation \eqref{eq:convex2}, it is possible to weaken the assumptions; one may simply assume that for a given Lagrangian $L$ and $x, x', y, y', z, z'\in \X$  with $z\in M(x, y)$ and $z'\in M(x', y')$, we have
\[
2 L\left({d(z, z') \over t}\right)
\leq L\left({d(x, x') \over t}\right)+L\left({d(y, y') \over t}\right)
\]
for any $t>0$ small.  Such a condition incorporates the Hamiltonian $H$ into the structure of space $\X$ but it is not clear to us how general $\X$ could be under this assumption. 
\end{rmk}





\begin{rmk}

It is easily seen that Theorem \ref{thm convexity1} holds
also when $(\X, d)$ satisfies for every $x, y, x', y'\in X$ and 
\begin{equation}
\label{eq weak busemann ineq 4}
\text{for any $z\in M(x, y)$ there exists $z'\in M(x', y')$ such that $2 d(z, z') \le d(x, x')+d(y, y')$}
\end{equation}
instead of \eqref{eq busemann ineq 4}.

We however do not know whether it is possible to find a concrete example of geodesic convexity preserving on such a more general space that is not a uniform Busemann NPC space. 
\end{rmk}

\begin{rmk}\label{rmk linear growth}
The convexity preserving property also holds when $H$ is not coercive in the sense of \eqref{coercive} (superlinear growth at infinity) but only with linear growth instead.
For instance, let us consider the case when $H(p) = p$.

Then, in view of Theorem \ref{thm hopf-lax1}, we have
\[
u(x, t)+u(y, t) \ge u_0(a)+u_0(b)-\vep
\]
for certain $a \in B_t(x)$ and $b \in B_t(y)$.
By the assumptions, we get $z \in M(x, y)$ and $c \in M(a, b)$ with $2 d(c, z) \le d(a, x)+d(b, y) \le 2 t \le \delta$ satisfying
\[
2 u(z, t) \le u_0(c) \le u(x, t)+u(y, t)+3\vep
\]
for any $t \le \delta/2$.
An iteration of this argument for finite steps concludes the proof for an arbitrary $t \ge 0$.

\begin{example}\label{ex npc only2}
Based on Example \ref{ex npc only}, we provide a concrete but very simple example, where (HJ) preserves geodesic convexity of the solution in a uniform Busemann NPC space. Recall that the metric space $\X=\S^1\times \R$ is equipped with the metric given as in \eqref{cylinder metric}. Let $(\theta_x, x_3)$ denote the cylindrical coordinates of any $x\in\X$. 

We now take 
\[
u_0(x)=x_3
\]
for any $x=(\theta_x, x_3)\in \X$. It is quite clear that $u_0$ is weakly (and also strongly) geodesically convex in $\X$. 

The unique metric viscosity solution of (HJ) with $H(p) = p$ in this case is clearly 
\[
u(x, t)=x_3-t
\]
for $x\in \X$ and $t\geq 0$, which is certainly a geodesically convex function in $x$ for all $t\geq 0$. 

\end{example}


\end{rmk}

\subsection{A PDE approach to geodesic convexity preserving}

Motivated by \cite{K2, Kor, GGIS, Ju} etc, we here provide an alternative proof for preservation of geodesic convexity from the PDE perspectives in the general framework of metric viscosity solutions established in \cite{GaS2, GaS}. In order to make our argument work, we assume that $\X$ is a Busemann space, which means that $\X$ is uniquely geodesic. 

\begin{thm}[Preservation of geodesic convexity in Busemann spaces]
\label{thm convexity2}
Assume that $(\X, d)$ is a complete Busemann space.
Assume that $H \colon \Rp \to \R$ is continuous and non-decreasing.
Let $u$ be the unique metric viscosity solution of (HJ) with $u_0$ Lipschitz continuous in $\X$.
If $u_0$ is geodesically convex,
then $u(\cdot, t)$ is also geodesically convex for all $t \ge 0$.
\end{thm}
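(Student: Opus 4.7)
The plan is to adapt the convexity maximum principle to the metric viscosity framework by a doubling-of-variables argument, using the penalty $\varphi(x, y, z) = d(z, m(x, y))^2$ as previewed after Theorem~\ref{thm intro}. Arguing by contradiction, suppose that there exist $T > 0$ and $x^*, y^* \in \X$ such that $2 u(m(x^*, y^*), T) - u(x^*, T) - u(y^*, T) > 0$. For parameters $\alpha, \beta, \sigma > 0$, together with a small spatial localizer $\gamma \Psi(x, y, z)$ when $\X$ is unbounded (whose admissibility is ensured by Lemma~\ref{thm lip pre} and the growth estimate~\eqref{eq growth}), I consider on $\X^3 \times [0, T]^3$
\[
\Phi(x, y, z, s, t, r) := 2u(z, r) - u(x, s) - u(y, t) - \alpha d(z, m(x, y))^2 - \beta\bigl[(s-r)^2 + (t-r)^2\bigr] - \frac{\sigma}{T-r} - \gamma \Psi,
\]
and select an (approximate) maximizer $(\hat x, \hat y, \hat z, \hat s, \hat t, \hat r)$, e.g.\ via Ekeland's variational principle. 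Standard penalization analysis shows that, as $\alpha, \beta \to \infty$, $\hat z \to m(\hat x, \hat y)$ and $\hat s, \hat t \to \hat r$, while the initial convexity of $u_0$ and the $\sigma/(T-r)$ penalty keep $\hat r$ in a compact subset of $(0, T)$.

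At this maximizer, three metric viscosity inequalities are activated. Freezing the other variables, $(\hat z, \hat r)$ realizes a local max of $u$ minus an admissible test function whose spatial part $\tfrac{\alpha}{2} d(\cdot, m(\hat x, \hat y))^2$ lies in $\underline{\mathcal{C}}$ with slope $\alpha \hat d$, where $\hat d := d(\hat z, m(\hat x, \hat y))$; indeed both $|\nabla|$ and $|\nabla^-|$ of $d(\cdot, a)^2$ equal $2 d(\cdot, a)$. Symmetrically, $(\hat x, \hat s)$ and $(\hat y, \hat t)$ realize local minima of $u$ minus admissible test functions; to avoid regularity issues with $d(\hat z, m(\cdot, \hat y))^2$, the spatial pieces are loaded into the $\mathcal{C}$-slot $\psi_2$ (which demands only local Lipschitz regularity). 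The critical input is then the differential form \eqref{intro Busemann} of the Busemann inequality: Proposition~\ref{thm busemann 3and4} makes $x \mapsto m(x, \hat y)$ and $y \mapsto m(\hat x, y)$ each $\tfrac{1}{2}$-Lipschitz, so $|\nabla \psi_2|^*(\hat x, \hat s) \le \alpha \hat d$ and similarly at $(\hat y, \hat t)$.

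Writing out the three inequalities and using the monotonicity of $H$ in the form $H^a(0) = \sup_{0 \le p \le a} H(p) = H(a)$, one obtains, up to terms that vanish as $\gamma \to 0$,
\[
H(\alpha \hat d) + \frac{\sigma}{2(T-\hat r)^2} \le \beta(\hat s + \hat t - 2\hat r), \qquad 2\beta(\hat s - \hat r) \le H(\alpha \hat d), \qquad 2\beta(\hat t - \hat r) \le H(\alpha \hat d).
\]
Summing the last two and substituting into the first yields $\sigma/(2(T-\hat r)^2) \le 0$, the desired contradiction. The main obstacle is the slope bound $|\nabla_x d(\hat z, m(\cdot, \hat y))| \le \tfrac{1}{2}$, which is the differential manifestation of the Busemann inequality and whose rigorous derivation is precisely the content of Proposition~\ref{thm diff mid}; secondary issues are the regularity bookkeeping for test functions in $\overline{\mathcal{C}}$ and $\underline{\mathcal{C}}$ (handled by exploiting the $\mathcal{C}$-slot for nonsmooth spatial dependence) and the construction of a coercive but asymptotically negligible localizer $\gamma \Psi$.
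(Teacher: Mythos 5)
Your overall architecture coincides with the paper's PDE proof of Theorem~\ref{thm convexity2}: doubling of variables with the penalty $d(z, m(x,y))^2$, three viscosity inequalities, the $\tfrac12$-slope bound of Lemma~\ref{thm diff mid} fed into the $\psi_2$-slot of the test function at $(\hat x,\hat s)$ and $(\hat y,\hat t)$, the monotonicity of $H$ converting $H^a(0)$ into $H(a)$ and $H_a(p)$ into $H(p-a)$, and the final algebraic cancellation. All of that is correct and matches the paper.

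The genuine gap is in the sentence ``Standard penalization analysis shows that \dots while the initial convexity of $u_0$ and the $\sigma/(T-r)$ penalty keep $\hat r$ in a compact subset of $(0,T)$.'' The geodesic convexity of $u_0$ --- the hypothesis that drives the theorem --- never enters your argument in a checkable way, and without it the penalization does not behave as claimed. From Lipschitz preservation (Lemma~\ref{thm lip pre}) alone one only gets
\[
2u(z,r)-u(x,s)-u(y,t)\le 2K\,d(z,m(x,y))+K\,d(x,y)+C\bigl(|s-r|+|t-r|\bigr),
\]
and the term $K\,d(x,y)$ is not controlled by the penalty $\alpha\, d(z,m(x,y))^2$; hence the supremum of $\Phi$ can be approached with $\hat x,\hat y$ drifting apart (a localizer $\gamma\Psi$ only postpones this to the limit $\gamma\to 0$), and you lose control of $\alpha\hat d$. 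That control is not cosmetic: because of the Ekeland correction and the localizer, your three inequalities actually involve $H(\alpha\hat d-\rho)$ on the subsolution side and $H(\alpha\hat d+\rho)$ on the supersolution side for a small $\rho>0$, so the contradiction requires $H(\alpha\hat d+\rho)-H(\alpha\hat d-\rho)\to 0$, which for a merely continuous $H$ needs $\alpha\hat d$ to remain bounded along the limiting procedure. The paper supplies exactly this missing ingredient as Proposition~\ref{thm conv grow}: the estimate $2u(z,t)-u(x,r)-u(y,s)\le C\left(d(z,m(x,y))+t+r+s\right)$, with \emph{no} $d(x,y)$ term, proved by comparing $u$ with the explicit barriers of Proposition~\ref{thm sol mid} built from $\langle d(z,m(x,y))\rangle_\vep$; the convexity and Lipschitz continuity of $u_0$ are used precisely there, at $t=0$, and the comparison principle propagates the bound. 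This estimate is what makes $\Phi$ bounded above without any localizer, yields the uniform bound \eqref{max claim3} on $p_\vep=k\,d(z_\vep,m(x_\vep,y_\vep))$, and keeps the times away from $0$ via \eqref{max claim5}. You need to prove (or import) this proposition before the penalization step; once it is in place, the rest of your argument goes through essentially as in the paper.
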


In contrast to Theorem \ref{thm convexity1}, we do not assume the convexity of $H$ in Theorem \ref{thm convexity2}. However, the assumptions on the structure of the metric space become stronger. It is not clear to us whether one can apply our PDE method to more general spaces such as uniform Busemann NPC spaces without strengthening the assumptions on $H$. 


The next lemma plays an important role in our proof.

\begin{lem}
\label{thm diff mid}
Assume that $(\X, d)$ is a complete Busemann space
and let $\varphi \in C^1([0, \infty))$.
Then,
$$
|\nabla_x \varphi(d(z, m(x, y)))| \le {1 \over 2}|\varphi'(d(z, m(x, y)))|
$$
for all $x, y, z \in \X$.
\end{lem}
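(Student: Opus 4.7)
The plan is to reduce the bound to three ingredients: the $C^1$ regularity of $\varphi$, the $1$-Lipschitz property of $d(z, \cdot)$, and a $\tfrac12$-Lipschitz estimate for the midpoint map $x \mapsto m(x, y)$ that is the differential incarnation of the Busemann inequality. For fixed $y, z \in \X$, set $r(x) := d(z, m(x, y))$, so the goal is to estimate the local slope of $x \mapsto \varphi(r(x))$.

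The first and key step is to show that
\[
d(m(x, y), m(x', y)) \le \tfrac{1}{2} d(x, x') \quad \text{for all } x, x' \in \X.
\]
I would obtain this by applying the Busemann inequality \eqref{eq busemann ineq} with its ``fixed'' first entry equal to $y$ and the two ``moving'' entries equal to $x$ and $x'$: since $M(y, x) = M(x, y)$ and $M(y, x') = M(x', y)$, the inequality gives $2 d(m(x, y), m(x', y)) \le d(x, x')$, where uniqueness of midpoints (a consequence of the Busemann assumption) legitimizes the notation $m(\cdot, \cdot)$. A byproduct is that $m(\cdot, y)$ is continuous, so $r(x') \to r(x)$ as $x' \to x$.

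The second step is purely elementary. By the reverse triangle inequality applied to $d(z, \cdot)$,
\[
|r(x') - r(x)| \le d(m(x, y), m(x', y)) \le \tfrac{1}{2} d(x, x').
\]
Combining this with the mean-value estimate for the $C^1$ function $\varphi$,
\[
|\varphi(r(x')) - \varphi(r(x))| \le \Bigl(\sup_{s \in I(x,x')} |\varphi'(s)|\Bigr) \cdot |r(x') - r(x)|,
\]
where $I(x,x')$ is the closed interval with endpoints $r(x), r(x')$, dividing by $d(x, x')$ and taking $\limsup_{x' \to x}$ yields
\[
|\nabla_x \varphi(d(z, m(x, y)))| \le \tfrac{1}{2} |\varphi'(d(z, m(x, y)))|,
\]
since $I(x,x')$ shrinks to $\{r(x)\}$ and $\varphi'$ is continuous there.

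The only genuine difficulty is step one, namely extracting the Lipschitz estimate for the midpoint map from \eqref{eq busemann ineq}; one must be attentive in swapping the roles of the fixed and moving points in the definition to match the intended configuration. Everything else is standard calculus once uniqueness of midpoints in the Busemann space is in hand.
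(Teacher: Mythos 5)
Your proposal is correct and follows essentially the same route as the paper: both arguments reduce the claim to the $\tfrac12$-Lipschitz bound $d(m(x,y),m(x',y))\le \tfrac12 d(x,x')$ coming from the Busemann inequality (you invoke the three-point form \eqref{eq busemann ineq} with the apex at $y$, the paper uses the equivalent four-point form \eqref{eq busemann ineq 4} with $y'=y$), combined with the reverse triangle inequality for $d(z,\cdot)$ and the $C^1$ regularity of $\varphi$. The only cosmetic difference is that you use a mean-value bound with $\sup|\varphi'|$ over a shrinking interval where the paper writes a first-order Taylor expansion; both are equivalent uses of continuity of $\varphi'$.
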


\begin{proof}
Note that $(x, y, z)\mapsto d(z, m(x, y))$ is Lipschitz continuous in $\X^3$.
Indeed, since $(\X, d)$ is a Busemann space, we have
\[
\begin{aligned}
|d(z', m(x', y'))-d(z, m(x, y))|
&\le d(z', z)+d(m(x', y'), m(x, y)) \\
&\le d(z', z)+{1 \over 2}d(x', x)+{1 \over 2}d(y', y).
\end{aligned}
\]
We next observe
\[
\begin{aligned}
&\varphi(d(z, m(x', y)))-\varphi(d(z, m(x, y))) \\
&\ = \varphi'(d(z, m(x, y)))(d(z, m(x', y))-d(z, m(x, y)))+o(|d(z, m(x', y))-d(z, m(x, y))|) \\
&\ \le {1 \over 2}\varphi'(d(z, m(x, y)))d(x', x)+o(d(x', x))
\end{aligned}
\]
for $x'$ near to $x$.
Therefore,
\[
\begin{aligned}
|\nabla_x \varphi(d(z, m(x, y)))|
&= \limsup_{x' \to x}{|\varphi(d(z, m(x', y)))-\varphi(d(z, m(x, y)))| \over d(x', x)} \\
&\le {1 \over 2}|\varphi'(d(z, m(x, y)))|.
\end{aligned}
\]
The proof is now completed.
\end{proof}

Before starting the proof of Theorem \ref{thm convexity2},
we establish the following estimate.

\begin{prop}
\label{thm conv grow}
Under the same assumptions as in Theorem \ref{thm convexity2},
let $u$ be the unique metric viscosity solution of (HJ).
Then there exists $C < \infty$ such that
\begin{equation}
\label{eq conv grow}
2 u(z, t)-u(x, r)-u(y, s) \le C\left(d(z, m(x, y))+t+s+r\right)
\end{equation}
for all $x, y, z \in \X$ and all $t, s, r \ge 0$.
In particular,
\begin{equation}\label{eq conv grow2}
2 u(z, t)-u(x, t)-u(y, t) \le C(d(z, m(x, y))+3t)
\end{equation}
for all $x, y, z\in \X$ and $t \ge 0$.
\end{prop}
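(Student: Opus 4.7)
The plan is to derive \eqref{eq conv grow} by combining three ingredients: (i) spatial Lipschitz regularity of $u(\cdot,t)$, uniform in $t$; (ii) time Lipschitz regularity of $u$ that follows from (i) together with the equation itself; and (iii) the geodesic convexity of the initial datum $u_0$ that is assumed in Theorem \ref{thm convexity2}. The companion bound \eqref{eq conv grow2} then falls out by specializing $r=s=t$.

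First I would record that $u(\cdot,t)$ is $K$-Lipschitz for every $t\ge 0$, where $K$ denotes the Lipschitz constant of $u_0$. Although the Hopf-Lax-based Lemma \ref{thm lip pre} is stated under the extra hypotheses (H2)-(H3), spatial Lipschitz preservation in fact holds also under the weaker hypotheses of Theorem \ref{thm convexity2}: it is a standard consequence of the comparison principle for metric viscosity solutions applied to spatially shifted data. Once $|\nabla u|\le K$ is available, plugging this into \eqref{hj1} together with the continuity of $H$ on $[0,K]$ yields the time estimate $|u(x,t)-u(x,s)|\le M|t-s|$ with $M:=\max_{[0,K]}|H|$.

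Next I would exploit the algebraic decomposition
\[
2u(z,t)-u(x,r)-u(y,s) = 2\bigl(u(z,t)-u_0(m(x,y))\bigr) + \bigl(2u_0(m(x,y))-u_0(x)-u_0(y)\bigr) + \bigl(u_0(x)-u(x,r)\bigr) + \bigl(u_0(y)-u(y,s)\bigr).
\]
Routing the first summand through the intermediate value $u(m(x,y),t)$ and invoking in turn the spatial and time Lipschitz estimates gives the bound $2K\,d(z,m(x,y))+2Mt$; the third and fourth summands are bounded by $Mr$ and $Ms$ directly from time regularity. The decisive step is that the middle summand is non-positive: since $(\X,d)$ is a Busemann space, midpoint sets are singletons and the geodesic convexity of $u_0$ reads exactly $2u_0(m(x,y))\le u_0(x)+u_0(y)$. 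Summing the three contributions and setting $C:=\max\{2K,2M\}$ yields \eqref{eq conv grow}; the special case \eqref{eq conv grow2} is immediate with $r=s=t$.

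The only mild subtlety I anticipate is justifying the spatial Lipschitz preservation without appealing to the Hopf-Lax formula, since the assumptions of Theorem \ref{thm convexity2} include neither convexity nor coercivity of $H$. This will be handled via the metric comparison principle, after which the rest of the argument is bookkeeping. Observe also that the geodesic convexity hypothesis on $u_0$ enters only at the middle summand; without it, one would pick up an additional $K\,d(x,y)$ term, which is precisely what convexity eliminates.
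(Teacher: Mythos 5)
Your overall strategy---decompose $2u(z,t)-u(x,r)-u(y,s)$ so that the geodesic convexity of $u_0$ kills the middle term and regularity estimates control the rest---is sound and genuinely different from the paper's argument, but as written it has a real gap: the two regularity estimates you invoke are not established and the justifications you sketch do not transfer to metric spaces. There are no ``spatially shifted data'' in a general geodesic space (no translations), so the standard Euclidean trick for Lipschitz preservation is unavailable; and the paper's Lemma \ref{thm lip pre} proves spatial Lipschitz preservation only under (H2)--(H3) via the Hopf--Lax formula, which is precisely what Theorem \ref{thm convexity2} does not assume. Likewise, ``plugging $|\nabla u|\le K$ into the equation'' is not a viscosity-solution argument for the time estimate; the rigorous route is through barrier functions of the form $u_0(y)\pm K\langle d(\cdot,y)\rangle_\vep\pm Ct$ and the comparison principle, and verifying that such barriers are admissible sub/supersolutions (the $\psi_1+\psi_2$ decomposition, membership in $\Colm$ or $\Coum$, where the descending slope of $-K\langle d(\cdot,y)\rangle_\vep$ can vanish at geodesic endpoints while the local slope does not) is exactly the delicate point the paper isolates in Proposition \ref{thm sol mid} and Remark \ref{rmk monotonicity}. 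Your outline never engages with where the monotonicity of $H$ or the Busemann hypothesis actually enters, which is a symptom of this being swept under the rug: the paper's proof runs three successive comparisons against super/subsolutions built from $\langle d(z,m(x,y))\rangle_\vep$, and needs Lemma \ref{thm diff mid} (the Busemann bound $|\nabla_x d(z,m(x,y))|\le \tfrac12$) to verify the subsolution half.

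The good news is that your decomposition can be repaired without proving Lipschitz preservation at positive times at all. Replace the first summand's treatment by the single one-point comparison $u(z,t)\le u_0(w)+K\langle d(z,w)\rangle_\vep$ with $w=m(x,y)$ (the right-hand side is a stationary supersolution since $\sup_{[0,K]}H\ge 0$ and it dominates $u_0$ at $t=0$), and the last two summands by $u(x,r)\ge u_0(x)-H(K)r$ (obtained by comparing $u$ with the subsolution $u_0(y)-K\langle d(x,y)\rangle_\vep-H(K)r$, treating the spatial part as the $\psi_2$-component so that only the bound $|\nabla\psi_2|\le K$ is needed, then setting $y=x$ and letting $\vep\to 0$). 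This yields $2u(z,t)-u(x,r)-u(y,s)\le 2Kd(z,m(x,y))+H(K)(r+s)$, which is even slightly sharper than \eqref{eq conv grow}. But establishing those barrier comparisons is the actual mathematical content of the proposition---it is what Proposition \ref{thm sol mid} supplies in the paper---and cannot be dismissed as bookkeeping.
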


In order to show Proposition \ref{thm conv grow}, we prepare the following result.

\begin{prop}
\label{thm sol mid} Suppose that $(\X, d)$ is a complete Busemann space. 
Set $\psi_{k, C, \vep}(x, y, z, t) = k\langle d(z, m(x, y)) \rangle_\vep+C t$ for $k, C \in \R$ and $\vep > 0$,
where $\langle r \rangle_\vep := \sqrt{r^2+\vep^2}$ for every $r\in \R$.
Then for any $k \in \R$ there exists $C = C_k \in \R$ independent of $\vep$ satisfying the following:
\begin{enumerate}
\item
if $k > 0$, then for any $x, y\in \X$, the function $\psi_1(z, t) = \psi_{k, C, \vep}(x, y, z, t)$ is a metric viscosity supersolution of (HJ) for all $C \ge C_k$; and
\item
if $k < 0$, then for any $y, z\in \X$, the function $\psi_2(x, t) = \psi_{k, C, \vep}(x, y, z, t)$ is a metric viscosity subsolution of (HJ) for all $C \le C_k$.
\end{enumerate}
\end{prop}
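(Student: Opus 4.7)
My approach is to verify the two viscosity inequalities directly from the definition, passing through pointwise slope bounds for $\psi_{k, C, \vep}$ and exploiting the monotonicity of $H$ to collapse the envelopes $H^a$ and $H_a$. The central quantitative ingredient is Lemma~\ref{thm diff mid} applied with $\varphi(r) = \langle r \rangle_\vep = \sqrt{r^2+\vep^2}$, whose derivative satisfies $0 \le \varphi'(r) < 1$: in~(2) it yields the Busemann-inherited half-factor
\[
|\nabla_x \psi_2|(x, t) \le \frac{|k|}{2} \cdot \frac{d(z, m(x, y))}{\langle d(z, m(x, y))\rangle_\vep} \le \frac{|k|}{2}
\]
uniformly in $\vep$, while the analogous chain-rule computation in~(1), using only the universal bound $|\nabla_z d(z, a)| \le 1$, gives $|\nabla_z \psi_1|(z, t) \le k$.

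For~(1), take any admissible test pair $\phi = \phi_1 + \phi_2$ with $\phi_1 \in \overline{\mathcal{C}}$, $\phi_2 \in \mathcal{C}$, and assume $\psi_1 - \phi$ has a local minimum at $(z_0, t_0)$. Minimality in $t$ forces $\partial_t \phi(z_0, t_0) = \partial_t \psi_1 = C$, while minimality in $z$, rearranged and passed to positive parts, produces
\[
[\phi_1(z', t_0) - \phi_1(z_0, t_0)]_+ \le [\psi_1(z', t_0) - \psi_1(z_0, t_0)]_+ + |\phi_2(z', t_0) - \phi_2(z_0, t_0)|,
\]
so after dividing by $d(z', z_0)$ and taking $\limsup_{z' \to z_0}$ (using $\phi_1 \in \overline{\mathcal{C}}$) we obtain $|\nabla \phi_1|(z_0, t_0) \le |\nabla^+ \psi_1|(z_0, t_0) + |\nabla \phi_2|^*(z_0, t_0)$. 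Since $H$ is non-decreasing, $H^a(p) = H(p+a) \ge H(0)$ for all $p, a \ge 0$, and therefore the supersolution inequality holds for every $C \ge C_k := -H(0)$. Part~(2) is symmetric: the local-max condition yields $|\nabla \phi_1|(x_0, t_0) \le |\nabla^-\psi_2|(x_0, t_0) + |\nabla \phi_2|^*(x_0, t_0) \le |k|/2 + |\nabla \phi_2|^*$, and the identity $H_a(p) = H(\max\{0, p - a\})$ combined with monotonicity of $H$ gives $H_a(|\nabla \phi_1|) \le H(|k|/2)$, so the subsolution inequality holds for every $C \le C_k := -H(|k|/2)$.

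The main obstacle is the half-factor slope bound in~(2), which is precisely the infinitesimal content of the Busemann hypothesis encoded in Lemma~\ref{thm diff mid}; without this factor the subsolution threshold would degrade to $H(|k|)$, and the doubling-variables scheme that drives the proof of Theorem~\ref{thm convexity2} could not close. The remaining steps---handling the test decomposition $\phi = \phi_1 + \phi_2$, confirming the slope bounds are uniform in $\vep$, and reducing the envelopes $H^a, H_a$ to ordinary values of $H$ via monotonicity---are routine bookkeeping.
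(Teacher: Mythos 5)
Your proof is correct: both parts close with the same constants as the paper's (under monotonicity of $H$, $-\inf_{0\le p\le k}H=-H(0)$ and $-\sup_{0\le p\le |k|/2}H=-H(|k|/2)$), and the decisive ingredient in part (2) is, as in the paper, the half-factor slope bound from Lemma~\ref{thm diff mid}. The route differs in one respect worth noting. The paper does not unwind the definition with arbitrary test functions; it checks a pointwise (``classical'') inequality for $\psi_{k,C,\vep}$ and then invokes \cite[Lemma 2.8]{GaS}, which upgrades classical sub/supersolutions in the admissible smooth classes to metric viscosity ones. Concretely, for (1) it notes $\psi_1\in\underline{\mathcal{C}}$ and evaluates $C+H(|\nabla_z\psi_1|)\ge C+\inf_{0\le p\le k}H$; for (2) it places the \emph{entire} function in the merely Lipschitz class $\mathcal{C}$ (so the Hamiltonian is tested as $H^{|\nabla_x\psi_2|^*}(0)\le\sup_{0\le p\le|k|/2}H$), and no slope-transfer inequality for test functions is ever needed. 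Your version re-proves the content of that cited lemma by hand via the comparison $|\nabla\phi_1|\le|\nabla^{\pm}\psi|+|\nabla\phi_2|^*$ at extremum points, which is more self-contained but also leans on monotonicity of $H$ to write $H^a(p)=H(p+a)$ and $H_a(p)=H(\max\{0,p-a\})$; the paper's $\inf$/$\sup$ formulation of $C_k$ avoids that hypothesis at this stage (it is only imposed later, in Theorem~\ref{thm convexity2}, where the proposition is applied). Two small remarks: your slope bound $|\nabla_z\psi_1|\le k$ in part (1) is derived but never used, since $H^a(p)\ge H(0)$ already suffices; and you should state explicitly that you are assuming $H$ non-decreasing, as the proposition as written does not.
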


\begin{proof}
If $k > 0$, we have $\psi_1 \in \Colm$ and 
\[
\partial_t\psi_1(z, t)+H\left(|\nabla_z\psi_1|(z, t)\right)
= C+H\left(k{d(z, m(x, y)) \over \langle d(z, m(x, y)) \rangle_\vep}\right)
\ge C+\inf_{0 \le p \le k}H(p).
\]
Therefore, setting 
\[
C_k = -\inf_{0 \le p \le k}H(p),
\] 
we see that $\psi_1$ is a metric viscosity solution of (HJ) by \cite[Lemma 2.8]{GaS}.

If $k < 0$, by Lemma \ref{thm diff mid}, we obtain that
\[
|\nabla_x\psi_2|(x, t) \le {|k| \over 2}{d(z, m(x, y)) \over \langle d(z, m(x, y)) \rangle_\vep} \le {|k| \over 2}.
\]
It follows that
\[
\partial_t\psi_2(x, t)+H^{|\nabla_x\psi_2|^*(x, t)}\left(0\right)
\le C+\sup_{0 \le p \le |k|/2}H(p).
\]
Therefore $\psi_2$ is a metric viscosity solution of (HJ) if we take
\[
C_k = -\sup_{0 \le p \le -k/2}H(p),
\]
thanks to \cite[Lemma 2.8]{GaS} again.
\end{proof}

\begin{proof}[Proof of Proposition \ref{thm conv grow}]
Since $u_0$ is geodesically convex and Lipschitz continuous,
we have
\[
2 u_0(z)-u_0(x)-u_0(y)
\le 2 u_0(z)-2 u_0(m(x, y))
\le 2K d(z, m(x, y)),
\]
where $K < \infty$ is the Lipschitz constant of $u_0$.
Proposition \ref{thm sol mid} provides a constant $C \in \Rp$ such that $(z, t) \mapsto K\langle d(z, m(x, y)) \rangle_\vep+C t/2$ is a metric viscosity supersolution of (HJ),
and $(x, t) \mapsto -2 K\langle d(z, m(x, y)) \rangle_\vep-C t$ and $(y, t) \mapsto -2 K\langle d(z, m(x, y)) \rangle_\vep-C t$ are metric viscosity subsolutions of (HJ) for all $\vep > 0$.

Now, since
\[
2 u(z, 0)-u(x, 0)-u(y, 0)
\le 2K\langle d(z, m(x, y)) \rangle_\vep+C\cdot 0
\]
and $u(z, t)$ is a metric viscosity subsolution of (HJ),
the comparison principle \cite[Theorem 7.5]{GaS2} implies that 
\[
2 u(z, t)-u(x, 0)-u(y, 0)
\le 2K\langle d(z, m(x, y)) \rangle_\vep+C t.
\]
Since $u(x, r)$ and $u(y, s)$ are metric viscosity supersolutions of (HJ),
following a similar argument, we obtain
\[
2 u(z, t)-u(x, r)-u(y, s)
\le 2K\langle d(z, m(x, y)) \rangle_\vep+C(r+s+t)
\]
for all $x, y, z \in \X$, $r, s, t \ge 0$.
Therefore, we get the inequality \eqref{eq conv grow} by sending $\vep \to 0$.
The inequality \eqref{eq conv grow2} follows immediately if one takes $r, s=t$ in \eqref{eq conv grow}. 
\end{proof}

\begin{proof}[Proof of Theorem \ref{thm convexity2}]
We assume by contradiction that 
\[
2u(m(\hat{x}, \hat{y}), \hat{t})-u(\hat{x}, \hat{t})-u(\hat{y}, \hat{t}) =: 2\eta > 0
\]
at some $\hat{x}, \hat{y}\in \X$ and $\hat{t} > 0$.
Setting $T = \hat{t}+1$, for $k, \vep, \sigma > 0$, we consider the continuous function
\[
\Phi(x, y, z, t, r, s) = F(x, y, z, t, r, s)-{1 \over \vep}\varphi(t, r, s)-\sigma h(t, r, s) \quad \text{for $x, y, z \in \X$, $t, r, s \in [0, T)$,}
\]
where 
\[
F(x, y, z, t, r, s) = 2 u(z, t)-u(x, r)-u(y, s)-k d(z, m(x, y))^2,
\]
\[
\varphi(t, r, s) = (t-r)^2+(t-s)^2+(r-s)^2 \ge 0,
\]
\[
h(t, r, s) = {1 \over T-t}+{1 \over T-r}+{1 \over T-s} \ge 0.
\]
Note by Proposition \ref{thm conv grow} that
\[
\begin{aligned}
F(x, y, z, t, r, s)
&\le C d(z, m(x, y))-k d(z, m(x, y))^2+C (t+r+s) \\
&\le {C^2\over 4 k}+3C T
< \infty
\end{aligned}
\]
with the constant $C$ given in Proposition \ref{thm conv grow}.
We hence see that $\Phi$ is bounded from above.
Also note that
\[
\Phi(\hat{x}, \hat{y}, \hat{z}, \hat{t}, \hat{t}, \hat{t})
\ge 2\eta-{3\sigma \over T-\hat{t}},
\]
which implies $\sup\Phi \ge \eta$ by taking $\sigma$ small enough.

In view of Ekeland's variational principle, there is a point $(x_\vep, y_\vep, z_\vep, t_\vep, r_\vep, s_\vep) \in \X^3\times[0, T)^3$ such that
\[
\Phi(x_\vep, y_\vep, z_\vep, t_\vep, r_\vep, s_\vep) \ge \sup\Phi-\vep^2
\]
and
\[
\Phi(x, y, z, t, r, s)-\vep R(x, y, z)
\]
attains a maximum at $(x_\vep, y_\vep, z_\vep, t_\vep, r_\vep, s_\vep)$ with
$
R(x, y, z) = d(x_\vep, x)+d(y_\vep, y)+d(z_\vep, z)
$.
Recalling $\sup\Phi \ge \eta$, we have
\[
\eta-\vep^2 \le C d(z_\vep, m(x_\vep, y_\vep))-k d(z_\vep, m(x_\vep, y_\vep))^2+C(t_\vep+r_\vep+s_\vep)-{1 \over \vep}\varphi(t_\vep, r_\vep, s_\vep),
\]
which implies that
\begin{equation}
\label{max claim3}
d(z_\vep, m(x_\vep, y_\vep)) \le {C+\sqrt{C^2+12k C T} \over 2k},
\end{equation}
\begin{equation}
\label{max claim5}
t_\vep+r_\vep+s_\vep \ge {\eta \over 2C},
\end{equation}
\begin{equation}
\label{max claim4}
\varphi(t_\vep, r_\vep, s_\vep)
= (t_\vep-r_\vep)^2+(t_\vep-s_\vep)^2+(r_\vep-s_\vep)^2 \le 3CT\vep
\end{equation}
for sufficiently small $\vep \le \sqrt{\eta/2}$ and some large $k = C^2/\eta$.
Now, the third inequality \eqref{max claim4} yields the existence of a subsequence of $t_\vep, r_\vep, s_\vep$, still indexed by $\vep$, such that 
$t_\vep, r_\vep, s_\vep \to t_0$ for some $t_0 \ge 0$.
We then see by the second inequality \eqref{max claim5} that $t_0 > 0$.
Hence, $t_\vep, r_\vep, s_\vep$ must all stay away from $0$ when $\vep$ is small enough.

Since $u$ is a metric viscosity solution,
we apply Lemma \ref{thm diff mid} to obtain 
\begin{equation}\label{viscosity ineq1}
{1\over 2\vep}\partial_t\varphi(t_\vep, r_\vep, s_\vep)+{\sigma \over 2}\partial_t h(t_\vep, r_\vep, s_\vep)+H_{\vep}(p_\vep) \le 0,
\end{equation}
\begin{equation}\label{viscosity ineq2}
-{1\over \vep}\partial_r\varphi(t_\vep, r_\vep, s_\vep)-\sigma\partial_r h(t_\vep, r_\vep, s_\vep)+H^{p_\vep+\vep}(0) \ge 0,
\end{equation}
\begin{equation}\label{viscosity ineq3}
-{1\over \vep}\partial_s\varphi(t_\vep, r_\vep, s_\vep)-\sigma\partial_s h(t_\vep, r_\vep, s_\vep)+H^{p_\vep+\vep}(0) \ge 0,
\end{equation}
\smallskip
where $p_\vep := k d(z_\vep, m(x_\vep, y_\vep))$.
By direct calculations, 
\[
\partial_t\varphi(t_\vep, r_\vep, s_\vep)+\partial_r\varphi(t_\vep, r_\vep, s_\vep)+\partial_s\varphi(t_\vep, r_\vep, s_\vep) = 0,
\]
\[
\partial_t h(t_\vep, r_\vep, s_\vep)+\partial_r h(t_\vep, r_\vep, s_\vep)+\partial_s h(t_\vep, r_\vep, s_\vep)
\ge {3 \over T^2}.
\]
Observe that
\begin{equation}\label{observation hamiltonian}
H^{p_\vep+\vep}(0) = \sup_{B_{p_\vep+\vep}(0)}H = H(p_\vep+\vep),
\quad H_\vep(p_\vep) = \inf_{B_{\vep}(p_\vep)}H = H(p_\vep-\vep)
\end{equation}
by the monotonicity assumption of $H$.

Combining the viscosity inequalities above (subtracting twice \eqref{viscosity ineq1} from the sum of \eqref{viscosity ineq2} and \eqref{viscosity ineq3}), we have, due to \eqref{observation hamiltonian}, 
\begin{equation}
\label{eq convpde2}
-{3\sigma \over T^2}+2(H(p_\vep+\vep)-H(p_\vep-\vep)) \ge 0.
\end{equation}
Recalling \eqref{max claim3}, we have the uniform boundedness of $p_\vep$ in $\vep$.
We therefore obtain a contradiction by passing to the limit in \eqref{eq convpde2} as $\vep \to 0$.
\end{proof}

\begin{rmk}\label{rmk monotonicity}
The monotonicity of $H$ is necessary for the result in Theorem \ref{thm convexity2}. 
For example, when $\X$ is the half line $\Rp$ with the usual Euclidean metric, the function 
\[
u(x, t) = \min\{ t-x, 0 \} \quad \text{ for $(x, t)\in \X\times [0, \infty)$}
\] 
is the unique metric viscosity solution of $\partial_t u-|\nabla u|=0$.
It is easily seen that although the initial value $u(x, 0) = -x$ is geodesically convex in $\X$, the solution itself is not at any $t > 0$. 

The main reason why we need the monotonicity in the proof is that the penalty function $\psi(x) = d(z, m(x, y))^2$ may not be of the class $\Colm$.
Indeed, when $\X=\Rp$, since for $x\in \X$,
\[
d(0, m(x, 1))^2-d(0, m(0, 1))^2 = (1+x)^2/4-1/4 \ge 0
\]
we see that $|\nabla^-\psi|(0) = 0$ while $|\nabla\psi|(0) = 1/2$ when $z = 0, y = 1$.

Moreover, it is not difficult to verify that the metric viscosity solution $u$ above in this special case solves the corresponding state constraint problem 
\begin{numcases}{}
\partial_t u(x, t)-|\nabla u(x, t)| =0 &\text{for $x>0$ and $t>0$,} \nonumber \\
\partial_t u(0, t)-|\nabla u(0, t)| \leq 0 &\text{for  $t>0$,} \nonumber\\
u(\cdot , 0)=u_0 &\text{in $\Rp$}  \nonumber
\end{numcases}
in the viscosity sense.  Our example therefore indicates that the convexity preserving property may not hold for time-dependent state constraint problems even in the Euclidean space, although, as shown in \cite{ALL}, solutions with state constraints are convex in the stationary case.
\end{rmk}

\subsection{Preservation of infinity-subharmoniousness}

In what follows, we switch the sign of the Hamiltonian; in other words, we consider (HJ2).

\begin{thm}[Preservation of uniform $\infty$-subharmoniousness]
\label{thm convexity3}
Let $(\X, d)$ be a complete geodesic space.
Assume that $H$ satisfies (H1)--(H3).
Let $u_0$ be Lipschitz continuous in $\X$ and $u$ be the unique metric viscosity solution of (HJ2).
If $u_0$ is uniformly $\infty$-subharmonious with respect to $\delta>0$,
then so is $u(\cdot, t)$ for all $t \ge 0$.
\end{thm}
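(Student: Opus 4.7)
The plan is to combine the sup-version of the Hopf-Lax formula with Proposition \ref{thm separation}. I first reformulate the target inequality: $2u(z, t) \le \sup_{B_r(z)} u(\cdot, t) + \inf_{B_r(z)} u(\cdot, t)$ follows once the $y$-dependent statement ``for every $y \in B_r(z)$ and every $\vep > 0$ there exists $x \in B_r(z)$ with $u(x, t) + u(y, t) \ge 2 u(z, t) - \vep$'' is established. Indeed, applying it with $y$ chosen so that $u(y, t) \le \inf_{B_r(z)} u(\cdot, t) + \vep$ and using $u(x, t) \le \sup_{B_r(z)} u(\cdot, t)$ yield the desired inequality up to $O(\vep)$, and then $\vep \to 0$ concludes. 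Note that the cruder alternative of producing a single pair $(x, y) \in B_r(z)^2$ with $u(x, t) + u(y, t) \ge 2u(z, t)$ (in analogy with Theorem \ref{thm convexity1}) is insufficient, since it does not control $u(x, t) + u(y, t)$ against $\sup + \inf$.

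Because $H$ satisfies (H1)--(H3), the substitution $v := -u$ turns (HJ2) into (HJ) with the same Hamiltonian $H$, so Theorem \ref{thm solhj} yields the sup-type Hopf-Lax formula
\begin{equation*}
u(x, t) = \sup_{a \in \X}\left\{ u_0(a) - t L\!\left(\frac{d(a, x)}{t}\right) \right\},
\end{equation*}
with the Lagrangian $L$ from \eqref{lagrangian}. In particular, for every $b \in \X$ one has $u(x, t) \ge u_0(b) - t L(d(b, x)/t)$.

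Now fix $z \in \X$, $t > 0$, $r \in (0, \delta]$, $\vep > 0$, and $y \in B_r(z)$. Pick a near-maximizer $a \in \X$ for $u(z, t)$, so that $u(z, t) \le u_0(a) - t L(d(a, z)/t) + \vep$. Applying Proposition \ref{thm separation} to the triple $(a, z, y)$ produces a point $c$ with $d(a, c) \le d(z, y) \le r$ (so $c \in B_r(a)$) and $d(c, y) \le d(a, z)$. Since $u_0$ is uniformly $\infty$-subharmonious with respect to $\delta$ and $r \le \delta$, $2 u_0(a) \le \sup_{B_r(a)} u_0 + \inf_{B_r(a)} u_0$. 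Because $c \in B_r(a)$, $\inf_{B_r(a)} u_0 \le u_0(c)$; picking $p \in B_r(a)$ with $u_0(p) \ge \sup_{B_r(a)} u_0 - \vep$ yields $u_0(p) + u_0(c) \ge 2 u_0(a) - \vep$. A second application of Proposition \ref{thm separation}, now to $(p, a, z)$, produces $x \in \X$ with $d(p, x) \le d(a, z)$ and $d(x, z) \le d(p, a) \le r$, so $x \in B_r(z)$. The Hopf-Lax lower bounds at $x$ (taking $b = p$) and at $y$ (taking $b = c$), combined with the monotonicity of $L$ and the estimates $d(p, x), d(c, y) \le d(a, z)$, give
\begin{equation*}
u(x, t) + u(y, t) \ge u_0(p) + u_0(c) - 2 t L(d(a, z)/t) \ge 2 u_0(a) - \vep - 2 t L(d(a, z)/t) \ge 2 u(z, t) - 3\vep,
\end{equation*}
as required. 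The case $t = 0$ is trivial.

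The main technical point is the choice of the auxiliary point $c$: it must lie in $B_r(a)$, so that uniform $\infty$-subharmoniousness of $u_0$ can be invoked at $a$, and simultaneously satisfy $d(c, y) \le d(a, z)$, so that the Hopf-Lax lower bound at $y$ through $c$ uses the same cost $t L(d(a, z)/t)$ as the one at $x$ through $p$. Proposition \ref{thm separation} provides such a $c$ by a single geodesic splitting, and this is precisely the geometric ingredient that makes the argument go through in an arbitrary complete geodesic space, without any Busemann-type metric convexity assumption.
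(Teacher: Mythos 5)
Your proposal is correct and follows essentially the same route as the paper's proof: both pick an $\vep$-maximizer $a$ (the paper's $c_\vep$) in the sup-type Hopf--Lax formula at $z$, apply Proposition \ref{thm separation} twice to transport points between $B_r(z)$ and $B_r(a)$ in each direction with the cost controlled by $tL(d(a,z)/t)$, and then invoke the uniform $\infty$-subharmoniousness of $u_0$ at $a$. The only difference is bookkeeping --- you work with explicit near-extremizers $p$ and $c$ where the paper records the two transport steps as inequalities for $\sup_{B_r(z)}u(\cdot,t)$ and $\inf_{B_r(z)}u(\cdot,t)$ against $\sup_{B_r(c_\vep)}u_0$ and $\inf_{B_r(c_\vep)}u_0$ --- which does not change the substance of the argument.
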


\begin{proof}

Our proof is again based on the Hopf-Lax formula for the solution $u$ of (HJ2), which, in the present case, is written as
\begin{equation}\label{hopf-lax2}
u(x, t)=\sup_{a \in X} \left\{u_0(a)-t L\left({d(a, x)\over t}\right)\right\}
\end{equation}
for every $a\in \X$ and $t > 0$, where $L$ is given as in \eqref{lagrangian}.

Fix $r > 0$ and $t > 0$.
For any $z \in \X$ and $\vep>0$, there exists $c_\vep \in \X$ such that
\begin{equation}
\label{eq subhpres1}
u(z, t) \leq u_0(c_\vep)-t L\left({d(c_\vep, z) \over t}\right)+\vep.
\end{equation}
Since $(\X, d)$ is a geodesic space, we see by Proposition \ref{thm separation} that for any $a \in B_r(c_\vep)$ there is $x \in \X$ satisfying $d(z, x) \le d(c_\vep, a)$ and $d(x, a) \le d(z, c_\vep)$.
In particular, $x \in B_r(z)$ and by the monotonicity of $L$ we see that
\[
u(x, t) \ge u_0(a)-t L\left({d(a, x) \over t}\right) \ge u_0(a)-t L\left({d(c_\vep, z) \over t}\right).
\]
Therefore,
\begin{equation}
\label{eq subhpres2}
\sup_{B_r(z)}u(\cdot, t) \ge \sup_{B_r(c_\vep)}u_0-t L\left({d(c_\vep, z) \over t}\right).
\end{equation}
Similarly, for any $y \in B_r(z)$ there is $b \in \X$ satisfying $d(c_\vep, b) \le d(z, y)$ and $d(b, y) \le d(c_\vep, z)$.
Hence, $b \in B_r(c_\vep)$ and
\[
u(y, t) \ge u_0(b)-t L\left({d(b, y) \over t}\right) \ge u_0(b)-tL\left({d(c_\vep, z) \over t}\right).
\]
Therefore,
\begin{equation}
\label{eq subhpres3}
\inf_{B_r(z)}u(\cdot, t) \ge \inf_{B_r(c_\vep)}u_0-t L\left({d(c_\vep, z) \over t}\right).
\end{equation}
Now combining the inequalities \eqref{eq subhpres2} and \eqref{eq subhpres3} with \eqref{eq subhpres1} we obtain
\[
2 u(z, t)-\sup_{B_r(z)}u(\cdot, t)-\inf_{B_r(z)}u(\cdot, t)
\le 2 u_0(c_\vep)-\sup_{B_r(c_\vep)}u_0-\inf_{B_r(c_\vep)}u_0+\vep
\]
for all $r \ge 0$.
Since $u_0$ is uniformly $\infty$-subharmonious with respect to $\delta$, we see that the right hand side is bounded from above by $\vep$ for any $r \leq \delta$.
We conclude the proof by sending $\vep \to 0$.
\end{proof}

Following the same argument, we may also show that (HJ2) preserves $\infty$-subharmoniousness instead of uniform $\infty$-subharmoniousness
provided that supremum in \eqref{hopf-lax2} can be replaced by maximum
because we can assume $c_\vep$ is independent of $\vep$.


For example, an additional assumption on the local compactness of $\X$ makes this work, as stated in the following theorem.

\begin{thm}[Preservation of $\infty$-subharmoniousness]
\label{thm convexity4}
Let $(\X, d)$ be a locally compact geodesic space.
Assume that $H$ satisfies (H1)--(H3).
Let $u_0$ be Lipschitz continuous in $\X$ and $u$ be the unique metric viscosity solution of (HJ2).
Assume that $u_0$ is $\infty$-subharmonious. Then $u(\cdot, t)$ is also $\infty$-subharmonious for all $t \ge 0$.
Moreover, for any $t\geq 0$, $u(\cdot , t)$ is pointwise convex.
\end{thm}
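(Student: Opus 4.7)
The plan is to mirror the proof of Theorem \ref{thm convexity3} line by line, with one structural upgrade: the local compactness of $\X$ allows me to replace the $\vep$-maximizer $c_\vep$ in the Hopf-Lax formula by an exact maximizer $c \in \X$. Once $c$ is a fixed point (not depending on $\vep$), I can apply the $\infty$-subharmoniousness of $u_0$ at $c$ with the point-dependent constant $\delta = \delta_c > 0$ from Definition \ref{def wp-convex}, and the $\vep$-error terms appearing in Theorem \ref{thm convexity3} simply disappear.

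First I would fix $z \in \X$ and $t > 0$ and write
\[
u(z,t) = \sup_{a \in \X}\left\{u_0(a) - tL\left(\frac{d(a,z)}{t}\right)\right\}
\]
from the Hopf-Lax formula \eqref{hopf-lax2} for (HJ2). Arguing exactly as in Proposition \ref{thm finiteprop}, using the Lipschitz regularity of $u_0$ together with the superlinearity \eqref{eq suplinear l} of $L$, the supremum is effectively over a closed ball $B_{Vt}(z)$ for some $V$ depending only on the Lipschitz constant of $u_0$ and on $L$. Since a complete locally compact geodesic (hence length) space is proper by the Hopf--Rinow theorem, $B_{Vt}(z)$ is compact; combined with the continuity of $a \mapsto u_0(a) - tL(d(a,z)/t)$, the supremum is attained at some $c \in \X$.

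Next, I would invoke the $\infty$-subharmoniousness of $u_0$ at $c$ to obtain $\delta > 0$ (depending on $c$, and hence on $z$ and $t$) such that $2u_0(c) \le \sup_{B_r(c)} u_0 + \inf_{B_r(c)} u_0$ for all $r \in (0, \delta]$. Then I would rerun the two matching estimates from the proof of Theorem \ref{thm convexity3}, but with $c$ in place of $c_\vep$ and with no approximation error: for any $a \in B_r(c)$, Proposition \ref{thm separation} supplies $x \in B_r(z)$ satisfying $d(z,x) \le d(c,a)$ and $d(x,a) \le d(z,c)$, so the monotonicity of $L$ gives $u(x,t) \ge u_0(a) - tL(d(c,z)/t)$; a symmetric matching provides the corresponding bound on $\inf_{B_r(z)} u(\cdot,t)$. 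Combining these with the identity $u(z,t) = u_0(c) - tL(d(c,z)/t)$ yields
\[
2u(z,t) - \sup_{B_r(z)} u(\cdot,t) - \inf_{B_r(z)} u(\cdot,t) \le 2u_0(c) - \sup_{B_r(c)} u_0 - \inf_{B_r(c)} u_0 \le 0
\]
for every $r \in (0, \delta]$, proving that $u(\cdot,t)$ is $\infty$-subharmonious. The pointwise convexity assertion then follows immediately from Proposition \ref{prop har-pt}.

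The only genuinely new ingredient is the upgrade from approximate to exact maximizer, and the main potential obstacle is justifying the compactness of $B_{Vt}(z)$; this is handled by the classical Hopf--Rinow theorem for complete locally compact length spaces, so I expect the argument to run through smoothly.
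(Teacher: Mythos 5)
Your proposal is correct and follows essentially the same route as the paper: the authors prove Theorem \ref{thm convexity4} by observing that local compactness (together with the standing completeness assumption and the finite-propagation reduction to a ball as in Proposition \ref{thm finiteprop}) lets the supremum in \eqref{hopf-lax2} be attained, so that the maximizer $c$ is independent of $\vep$ and the argument of Theorem \ref{thm convexity3} runs with the point-dependent $\delta_c$, with pointwise convexity then following from Proposition \ref{prop har-pt}. Your write-up merely makes explicit the compactness of closed balls via Hopf--Rinow, which is exactly the detail the paper leaves implicit.
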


The last statement is an immediate consequence of Proposition \ref{prop har-pt}.
\begin{rmk}
For the same reason as explained in Remark \ref{rmk linear growth}, the preservation of $\infty$-subharmoniousness  holds for more general Hamiltonian with linear growth such as $H(p) = p$ for every $p\geq 0$. 
Indeed, when $H(p) = p$, since, due to Theorem \ref{thm hopf-lax1}, the metric viscosity solution is represented as 
\[
u(x, t) = \sup_{B_t(x)}u_0 = \sup_{a \in \X}\left\{ u_0(a)-t L\left({d(a, x) \over t}\right) \right\}
\]
with $L(v) = \infty$ for $v > 1$, 
following the proof of Theorem \ref{thm convexity3}, one may show that the same results in Theorem \ref{thm convexity3} and Theorem \ref{thm convexity4} hold in this case as well.
\end{rmk}

We finally remark that pointwise convexity itself is not preserved by the Eikonal equation \eqref{c} or the general evolution  \eqref{eq negative hj}.
In fact, the infinity-subharmoniousness of $u_0$ assumed in Theorem \ref{thm convexity4} cannot be replaced by pointwise convexity.
Consider the metric space $\X=\X_1\cup \X_2$ with $\X_1=\{0\}\times \R$, $\X_2=\Rp\times \{0\}$, and let $u_0(x)=0$ for $x\in \X_1$ and $u_0(x)=-x_1$ for $x\in \X_2$.
One may easily verify that $\X^\circ=\X$ and $u_0$ is a pointwise convex function.
However, the solution of \eqref{c} with our choice of $u_0$ is 
\[
u(x, t)=\max_{y\in B_t(x)}u_0(y)=\min\{t-x_1, 0\},
\]
which does not satisfy the pointwise convexity at $(t, 0)\in \X$.
The main reason for the loss of pointwise convexity of $u(\cdot, t)$ is that $u_0$ is not $\infty$-subharmonious;
when $z=(0, 0)$, 
\[
\max_{B_r(z)} u_0+\min_{B_r(z)} u_0=-r<0=u_0(z)
\]
for any $r>0$.

\section{Remarks on convexity preserving properties in discrete spaces}\label{sec lattice}
\subsection{2-dimensional lattices}
In this section, we discuss a particular metric space, the $2$-dimensional lattice graph. Let $\X=\L^2\subset \mathbb{R}^2$ be the square lattice (grid) graph in the plane. Define $\|x\|=|x_1|+|x_2|$ for any $x\in \L^2$, where $(x_1, x_2)$ are the coordinates of $x$, viewed as a point in $\mathbb{R}^2$. The metric on $\L^2$ is given by
\[
d(x, y)=\|x-y\| \quad \text{ for all $x, y\in \L^2$}.
\]
It is easy to verify that $d$ does define a metric on $\L^2$. 

This space $\L^2$ is clearly locally compact. It is not a Busemann space; in other words, the Busemann condition \eqref{eq busemann ineq} does not hold in general. For example, let $x=(0, 0)$, $y=(0, 2k)$ and $y'=(k, 2k)$ with $k\in \mathbb{Z}$. Then it is clear that $z'=(k, k/2)\in M(x', y')$ and $z=(0, 1)$ is the only element in $M(x, y)$. Hence, we have $d(z, z')=3|k|/2$ but $d(x, x')+d(y, y')=|k|$.

However, $\L^2$ equipped with the metric $d$ is a uniform Busemann NPC space. 

\begin{prop}
The metric space $\L^2$ is a uniform Busemann NPC space.
\end{prop}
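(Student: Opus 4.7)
The plan is to verify Definition \ref{weak busemann} with $\delta = 1/4$ by showing that for every $p \in \L^2$ the ball $B_\delta(p)$, together with all geodesics joining its points, sits isometrically inside a star (the union of the four edges of $\L^2$ meeting a single lattice vertex). Since stars are trees and hence Busemann spaces by Example \ref{ex busemann}(3), the inequality \eqref{eq busemann ineq} will then hold automatically within $B_\delta(p)$.

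First I would split the argument into two cases according to the position of $p$. If $p$ lies at distance at least $\delta$ from every vertex of the lattice, then $B_\delta(p)$ is contained in the single edge hosting $p$, isometric to a sub-interval of $\R$ of length at most $2\delta$, which is trivially Busemann. Otherwise $p$ lies within distance strictly less than $\delta$ of some vertex $v$, and a short triangle-inequality computation yields $B_\delta(p)\subset B_{2\delta}(v)$, where $B_{2\delta}(v)$ is the star of radius $2\delta < 1$ centered at $v$. In either case, $\diam B_\delta(p) \le 2\delta < 1$, so every geodesic in $\L^2$ joining two points of $B_\delta(p)$ remains inside the ambient star (any competing path that leaves the star must traverse an additional full edge and therefore exceed the required length); in particular, midpoints in $\L^2$ coincide with midpoints computed inside the local tree, and $M(x,y)$ is a singleton for each pair $x,y\in B_\delta(p)$.

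Invoking then the Busemann property of trees, the inequality $2d(z, z') \le d(y, y')$ holds for any $x, y, y' \in B_\delta(p)$ and the corresponding midpoints $z \in M(x, y)$, $z' \in M(x, y')$, which verifies Definition \ref{weak busemann}. The main obstacle I anticipate is the careful bookkeeping around the position of $p$ relative to lattice vertices and the verification that geodesics between nearby points of $\L^2$, which may a priori be non-unique globally, are all confined to the local star so that midpoint computations reduce to the one-dimensional or tree setting; once this local confinement is pinned down, reduction to the tree case closes the proof.
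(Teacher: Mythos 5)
Your proof is correct, and it takes a genuinely different route from the paper's. The paper fixes $\delta=1/3$ and verifies the midpoint inequality by hand: for nearby points the two relevant geodesics either lie on a common grid curve, in which case an arc-length parametrization reduces everything to a one-dimensional computation, or they cross at a lattice junction $O$, in which case the inequality follows from the triangle inequality through $O$ combined with the additivity $d(x,O)+d(x',O)=d(x,x')$. You instead localize and delegate: every ball $B_\delta(p)$, together with all geodesics between its points, sits inside the closed star $B_1(v)$ of a nearby vertex $v$, which is a metric tree, and the Busemann property of trees (Example \ref{ex busemann}(3)) then yields \eqref{eq busemann ineq}. Your route is shorter and more conceptual, but it shifts the burden onto two confinement facts that deserve to be stated precisely: (i) a geodesic between $x,y\in B_{2\delta}(v)$ cannot leave $B_1(v)$ because any path from $x$ reaching a point outside $B_1(v)$ already has length exceeding $1-d(v,x)\ge 1/2\ge d(x,y)$ --- this quantitative reason is cleaner than the ``traverses an additional full edge'' count you sketch, which is the right intuition but not quite the estimate one needs; and (ii) the closed star is metrically convex in $\L^2$, so that the distances $d(y,y')$ and $d(z,z')$ appearing in \eqref{eq busemann ineq} coincide with the intrinsic tree distances (for points on perpendicular half-edges the detour around a unit cell has length $4-s-t\ge s+t$, so the path through $v$ always wins). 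Both facts hold, and with them written out your argument closes; the paper's version remains entirely self-contained at the cost of the explicit case analysis.
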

\begin{proof}
It turns out that one only needs to take $\delta=1/3$. For any $x, y, x', y'\in \L^2$ with $d(x, y)<1/3$, $d(x', x)\leq 1/3$ and $d(y', y)\leq 1/3$, since $d(x', y')<1$, there is only one geodesic joining $x'$ and $y'$. Denote by $\overline{x' y'}$ the unique geodesic from $x'$ to $y'$ and by $\overline{x y}$ the unique geodesic from $x$ to $y$. We discuss two cases below.

Case 1. If $\overline{x' y'}$ and $\overline{x y}$ lie on the same curve in $\L^2$, then this situation can be reduced to a one-dimensional problem; namely, by parametrizing the curve by the arc-length as $\gamma: [0, \tau] \to \L^2$ for some $\tau\leq 1$, we may assume $x=\gamma(a), y=\gamma(b)$ while $x'=\gamma(a')$, $y'=\gamma(b')$ for $a, b, a', b'\in [0, \tau]$. Then the distance between the midpoint of $\overline{x y}$ and that of $\overline{x' y'}$ is 
\[
d\left(\gamma\left(a+b\over 2\right), \gamma\left(a'+b'\over 2\right)\right),
\]
which satisfies the estimate
\[
\begin{aligned}
d\left(\gamma\left(a+b\over 2\right), \gamma\left(a'+b'\over 2\right)\right)&\leq \left|{a+b\over 2}-{a'+b'\over 2}\right|\\
&\leq {1\over 2}|a-a'|+{1\over 2}|b-b'|={1\over 2}d(x, x')+{1\over 2}d(y, y').
\end{aligned}
\]

Case 2. Suppose $\overline{x' y'}$ and $\overline{x y}$ lie on the distinct curves in $\L^2$. Then they must intersect at a certain junction point in the lattice. Without loss, we assume the intersection is the origin $O = (0, 0)$ of the lattice. In this case, we have the following relations for the midpoint $z$ on $\overline{x y}$ and the midpoint $z'$ on $\overline{x' y'}$:
\[
d(z, O)\leq {1\over 2}d(x, O)+{1\over 2}d(y, O),
\quad d(z', O)\leq {1\over 2}d(x', O)+{1\over 2}d(y', O). 
\]
As a result, we have 
\[
d(z, z')=d(z, O)+d(z', O)\leq {1\over 2}d(x, O)+{1\over 2}d(x', O)+{1\over 2}d(y, O)+{1\over 2}d(y', O).
\]
We end our verification by noticing that
\[
d(x, O)+d(x', O)=d(x, x') \quad \text{and}\quad d(y, O)+d(y', O)=d(y, y') .
\] 
\end{proof}
Hence, as a consequence of Theorem \ref{thm convexity1}, the preservation of weak geodesic convexity  holds for the solution of (HJ) in $\L^2$. However, we must point out that the convexity preserving property in this case unfortunately can only apply to trivial cases, since the only weakly geodesically convex functions are constants, as discussed in detail below.



\subsection{Weak geodesic convexity on $\L^2$}

Let us consider the function $f(x)=\|x\|$ on $\L^2$. This function is not geodesically convex either in the weak sense or in the strong sense. For example, when $x=(1/2, 1)$ and $y=(1, 1/2)$, then the midpoint $z=(1, 1)$, but these three points fail to fulfill the convexity condition in Definition \ref{def weak convex}. In fact, the only weakly geodesically convex functions on $\L^2$ are constants, as shown below. 

\begin{prop}\label{trivial convex}
If $u\in C(\L^2)$ is weakly geodesically convex, then $u$ is a constant. 
\end{prop}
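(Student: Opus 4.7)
The plan is to translate weak geodesic convexity into strong local constraints at the integer lattice $\Z^2\subset\L^2$, then derive constancy from a short algebraic contradiction.

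I would first show that $u$ restricted to every horizontal line $\{y=n\}$ (with $n\in\Z$) contained in $\L^2$ is convex in the usual sense, and likewise on every vertical line. Any geodesic in $\L^2$ between two points $(a,n),(b,n)$ on the same horizontal line must stay on that line (leaving and returning to it strictly increases the $l^1$-length), so the unique midpoint is the Euclidean midpoint. Weak convexity then reduces to midpoint convexity on $\R$, which with continuity yields ordinary convexity.

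Next, at each vertex $v\in\Z^2$ and any pair of perpendicular unit vectors $e_i,e_j\in\{\pm e_1,\pm e_2\}$, I would derive the corner inequality
\[
2u(v)\le u(v+e_i)+u(v+e_j).
\]
For $\epsilon\in(0,1)$, any geodesic between $v+\epsilon e_i$ and $v+\epsilon e_j$ must pass through $v$ (alternative routes around the unit square have length at least $4-2\epsilon$), so $v$ is the unique midpoint; weak convexity gives $2u(v)\le u(v+\epsilon e_i)+u(v+\epsilon e_j)$. Feeding in the 1D-convex upper bound $u(v+\epsilon e_i)\le(1-\epsilon)u(v)+\epsilon u(v+e_i)$ on the edge from $v$ to $v+e_i$, and its analogue for $e_j$, the claim follows after dividing by $\epsilon$.

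The decisive step is to prove $u$ is constant on $\Z^2$. Suppose not; after translating we may assume $u(0,0)=0$ and $\delta:=u(1,0)>0$. I would then apply the corner inequality at three neighboring vertices of the unit square. The NW corner at $(1,0)$ (pair $-e_1,+e_2$) gives $u(1,1)\ge 2u(1,0)-u(0,0)=2\delta$. The SW corner at $(1,1)$ (pair $-e_1,-e_2$) gives $u(0,1)\ge 2u(1,1)-u(1,0)=2u(1,1)-\delta$. The SE corner at $(0,1)$ (pair $+e_1,-e_2$) gives $u(1,1)\ge 2u(0,1)-u(0,0)=2u(0,1)$. Substituting the second inequality into the third produces $u(1,1)\ge 4u(1,1)-2\delta$, hence $u(1,1)\le 2\delta/3$, contradicting $u(1,1)\ge 2\delta$ from the first. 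Thus any two adjacent lattice vertices share the same $u$-value, and $u\equiv C$ on $\Z^2$ for some constant $C$.

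Finally, to propagate constancy to all of $\L^2$, note that on each edge 1D convexity together with equal endpoint values forces $u\le C$. Applying the corner inequality at $v\in\Z^2$ for $\epsilon\in(0,1)$ (before the upgrade to $\epsilon=1$), $2C\le u(v+\epsilon e_i)+u(v+\epsilon e_j)\le 2C$, so both summands must equal $C$. Every interior edge point of $\L^2$ arises as some $v+\epsilon e_i$ with $v\in\Z^2$ and $\epsilon\in(0,1)$, so $u\equiv C$ globally. The main obstacle is Step 3: the naive discrete subharmonicity deducible from the corner inequalities only gives that at most one neighbor of each vertex has strictly smaller value, which is insufficient; the key is the three-corner cycle NW--SW--SE that multiplicatively amplifies the gap $\delta$ and collapses it into the algebraic impossibility $\delta\le 0$.
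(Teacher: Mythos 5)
Your proof is correct and rests on the same core mechanism as the paper's: perturb two points into the interiors of perpendicular edges meeting at a lattice vertex $v$ so that $v$ is the unique midpoint, extract the corner inequality $2u(v)\le u(v+e_i)+u(v+e_j)$, and chain these around a unit cell. (The paper gets its versions \eqref{lattice convex1}--\eqref{lattice convex3} by sending the perturbation parameter to its endpoint and invoking continuity; your detour through convexity along lattice lines reaches the same inequality, just less directly.) The differences are in how the system is closed and in completeness, and here your write-up is actually tighter. The paper normalizes $u(0,0)=0$, derives $u(0,1)\ge 3u(1,0)$ and $u(1,0)\ge 3u(0,1)$, and asserts both vanish; as written this needs one more ingredient, e.g.\ the fourth corner inequality at $(0,0)$ giving $u(1,0)+u(0,1)\ge 0$, since the two displayed inequalities alone are satisfied by $u(1,0)=u(0,1)=c$ for any $c\le 0$. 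Your version sidesteps this sign issue by normalizing so that the \emph{larger} of the two adjacent values sits at $(1,0)$ (legitimate via the reflection symmetry of $\L^2$, which you should mention alongside ``translating'') and then running the three-corner chain to the clean contradiction $2\delta\le u(1,1)\le 2\delta/3$. You also supply the final step the paper leaves implicit: convexity along lattice lines plus the saturated corner inequality forces $u\equiv C$ on edge interiors, not just at vertices. In short: same approach, but your execution closes two small gaps in the paper's own argument.
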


\begin{proof}
Let us focus on only one cell, the square $S_0$ with vertices $(0, 0)$, $(1, 0)$, $(1, 1)$ and $(0, 1)$. 
Without loss, we may assume that $u(0, 0)=0$. Since there is only one geodesic joining $(\vep, 0)$ and $(1, 1-\vep)$ for $\vep>0$ small, the weak geodesic convexity of $u$ yields
\[
u(1, 1-\vep)+u(\vep, 0)\geq 2u(1, 0). 
\]
Sending $\vep\to 0$, we have
\begin{equation}\label{lattice convex1}
u(1, 1)\geq 2u(1, 0)
\end{equation}
due to the continuity of $u$. Similarly, we can obtain
\begin{equation}\label{lattice convex2}
u(1, 1)\geq 2 u(0, 1)
\end{equation} 
and 
\begin{equation}\label{lattice convex3}
u(0, 1)+u(1, 0)\geq 2 u(1, 1).
\end{equation} 
Combining \eqref{lattice convex3} with \eqref{lattice convex1} and \eqref{lattice convex2} respectively, we have
\[
u(0, 1)\geq 3 u(1, 0)
\]
and
\[
u(1, 0)\geq 3 u(0, 1),
\]
which implies that $u(0, 1)=u(1, 0)=0$ and therefore $u\equiv 0$ on $S_0$. We may extend this argument to the neighboring cells and eventually show that $u\equiv 0 = u(0, 0)$ on $\L^2$. 
\end{proof}

Such a result shows that Theorem \ref{thm convexity1} only applies to the trivial cases when $u\equiv u_0$ is constant in $\L^2$. In this discrete setting, it seems that we need to further relax the notion of convexity, using the following definition. 
\begin{defi}\label{def lattice convex}
We say a function $u\in C(\L^2)$ is 1-weakly geodesically convex if $u$ satisfies \eqref{eq weak convex} (with ``$\inf$'' replaced by ``$\min$'') but only for all $x=(x_1, x_2), y=(y_1, y_2)\in \X=\L^2$ satisfying either 
\begin{equation}\label{lattice constraint0}
\min\{|x_1-y_1|, |x_2-y_2|\}=0
\end{equation}
or
\begin{equation}\label{lattice constraint}
\min\{|x_1-y_1|, |x_2-y_2|\}\geq 1.
\end{equation}
\end{defi}

\begin{prop}
The norm function $f(x)=\|x\|$ on $\L^2$ is $1$-weakly geodesically convex but not $1$-strongly geodesically convex.
\end{prop}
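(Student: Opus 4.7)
The plan is to verify 1-weak geodesic convexity case by case following Definition \ref{def lattice convex}, and then to refute 1-strong geodesic convexity by an explicit lattice midpoint.

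For condition \eqref{lattice constraint0} I would take, by symmetry in the two coordinates, $x_2 = y_2$. The requirement that $x$ and $y$ lie on a common line of $\L^2$ forces $x_2 \in \Z$ whenever $x_1 \neq y_1$, so the unique midpoint is $m = \bigl((x_1+y_1)/2,\, x_2\bigr) \in M(x,y) \cap \L^2$, and the scalar triangle inequality $|x_1+y_1| \le |x_1|+|y_1|$ gives
\[
2 f(m) = |x_1+y_1| + 2|x_2| \le |x_1|+|y_1|+2|x_2| = f(x)+f(y).
\]

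For condition \eqref{lattice constraint}, after reflecting coordinates we may assume $x_1 \le y_1$ and $x_2 \le y_2$, setting $A := y_1-x_1 \ge 1$ and $B := y_2-x_2 \ge 1$. The midpoint set $M(x,y)$ sits inside the anti-diagonal segment $z(a) := (x_1+a,\, x_2+(A+B)/2-a)$, with $a$ ranging over an interval of length $\min(A,B) \ge 1$, and lattice points on this segment (those $a$ with $x_1+a \in \Z$ or $x_2+(A+B)/2-a \in \Z$) are spaced at most $1$ apart in $a$ since each of the two conditions yields an arithmetic progression of step $1$. Convexity of $f$ on $\R^2$ gives $f(m) \le (f(x)+f(y))/2$ at the real midpoint $m = z(A/2) = (x+y)/2$, and $a \mapsto f(z(a))$ is convex piecewise linear with slopes in $\{-2,0,+2\}$, folding where $z(a)$ meets a coordinate axis. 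The key step is to show that the sub-level set $I := \bigl\{ a : f(z(a)) \le (f(x)+f(y))/2 \bigr\}$ has $a$-length at least $1$, so that $I$ must contain a lattice midpoint. \textbf{This length estimate is the main obstacle.} I would split on whether the rectangle $[x_1,y_1]\times[x_2,y_2]$ lies in a single closed quadrant: in that subcase $f$ is affine on the segment and identically equal to $(f(x)+f(y))/2$, so $I$ is the whole $a$-range; otherwise the segment crosses a coordinate axis, in which case $f$ dips strictly below $(f(x)+f(y))/2$ at $m$, and the $\pm 2$ slope bound combined with $A,B \ge 1$ keeps $|I| \ge 1$. In either case some lattice $z \in M(x,y)$ realizes $2f(z) \le f(x)+f(y)$.

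For the failure of 1-strong geodesic convexity I would exhibit $x = (0,2)$ and $y = (2,0)$, which satisfy \eqref{lattice constraint}. The staircase path $x \to (2,2) \to y$ in $\L^2$ has length $4 = d(x,y)$ with $d(x,(2,2)) = d((2,2),y) = 2$, so $z := (2,2) \in M(x,y) \cap \L^2$; yet $2f(z) = 8 > 4 = f(x)+f(y)$, violating \eqref{eq strong convex} at this midpoint.
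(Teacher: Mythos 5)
Your refutation of $1$-strong geodesic convexity is correct and is essentially the paper's own example (the paper uses $x=(1,0)$, $y=(0,1)$, $z=(1,1)$; your $(0,2),(2,0),(2,2)$ works the same way). The trouble is in the main case \eqref{lattice constraint}, at exactly the step you flag as the main obstacle: the length estimate $|I|\ge 1$ is false, and so is the dichotomy you propose to establish it. Take $x=(1/2,-2)$ and $y=(3/2,-1)$, both in $\L^2$, with $A=B=1$. The rectangle $[1/2,3/2]\times[-2,-1]$ lies in the single closed quadrant $\{z_1\ge 0,\, z_2\le 0\}$, but there $f(z)=z_1-z_2$, which on the anti-diagonal $z_1+z_2=\mathrm{const}$ is affine of slope $2$ in $a$, not constant: $f(z(a))=3/2+2a$ while $(f(x)+f(y))/2=5/2$, so $I=[0,1/2]$ has length $1/2$. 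Your first branch (``$f$ identically equal to $(f(x)+f(y))/2$'') is valid only when $z_1$ and $z_2$ carry a common sign on the rectangle (first or third quadrant); in the second or fourth quadrant $f$ is affine non-constant and $I$ is exactly half the segment. The second branch fails as well: for $x=(0,-1)$, $y=(1,1/4)$ the segment crosses the $z_1$-axis and $f$ dips strictly below the threshold at $m$, yet $I=[0,5/8]$. In all of these examples the desired inequality still holds, but only because a particular lattice midpoint, whose integer coordinate is inherited from $x$, from $y$, or from the fractional part of $(x_1+y_1)/2$ or $(x_2+y_2)/2$, happens to land in $I$; a length-plus-pigeonhole argument cannot see this. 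That is precisely why the paper's proof abandons any measure-theoretic count and instead constructs the midpoint $z$ explicitly in terms of the fractional parts $e_1,e_2$, splitting on the sign of $(x_1-y_1)(x_2-y_2)$ (which is exactly the dichotomy between the ``constant'' and ``affine non-constant'' behaviours above). To repair your proof you would have to show, in the affine non-constant and axis-crossing configurations, that the half-segment $I$ always contains one of these explicitly constructed lattice parameters.

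A smaller issue concerns case \eqref{lattice constraint0}: two points of $\L^2$ with $x_2=y_2$ and $x_1\neq y_1$ need not lie on a common grid line, e.g.\ $x=(0,1/2)$ and $y=(2,1/2)$, so your inference that $x_2\in\Z$ does not cover all admissible pairs (here the unique midpoint is $(1,1/2)$, which lies in $\L^2$ because its \emph{first} coordinate is an integer, and the inequality holds with equality). The paper also dismisses this case as clear, but your stated justification is not valid as written.
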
 
\begin{proof}
It is clear that $f$ satisfies the convexity inequality under the constraint \eqref{lattice constraint0}. It suffices to give the proof in the case \eqref{lattice constraint}. 

For any $x=(x_1, x_2), y=(y_1, y_2)$ from $\L^2$, assuming first that $x_1+y_1\geq 0$ and $x_2+y_2\geq 0$ with $x_2+y_2\geq x_1+y_1$, we fix a point $z$ in the following way:

\noindent {Case 1.} When $(x_1-y_1)(x_2-y_2)\geq0$, we take
\[
z=\begin{cases}
\big(\left[{(x_1+y_1)/ 2}\right], {(x_2+y_2)/ 2}+e_1\big), &\text{ if $e_1\leq {1\over 2}$,}\\
\big(\left[{(x_1+y_1)/ 2}\right]+1, {(x_2+y_2)/ 2}-1+e_1\big), &\text{ if $e_1>{1\over 2}$,}
\end{cases}
\]
where 
\begin{equation}\label{e1}
e_1={x_1+y_1\over 2}-\left[{x_1+y_1\over 2}\right].
\end{equation}
A direct verification with an application of \eqref{lattice constraint} yields that
\[
\begin{aligned}
d(x, z)&=\begin{cases}
\left|(y_1-x_1)/2-e_1\right|+\left|(y_2-x_2)/2+e_1\right|, &\text{if $e_1\leq {1/ 2}$,}\\
\left|(y_1-x_1)/2+1-e_1\right|+\left|(y_2-x_2)/2-1+e_1\right|, &\text{if $e_1>{1/ 2}$}
\end{cases}\\
&= {1\over 2}|x_1-y_1|+{1\over 2}|x_2-y_2|={1\over 2}d(x, y). 
\end{aligned}
\]
We can similarly get
\[
d(y, z)={|x_1-y_1|\over2}+{|x_2-y_2|\over 2}={1\over 2}d(x, y).
\]
Hence, we deduce $z\in M(x, y)$ in this case.

{\noindent Case 2.} When $(x_1-y_1)(x_2- y_2)<0$, we let
\begin{equation}\label{eq:lattice3}
z=\begin{cases}
\big(\left[{(x_1+y_1)/ 2}\right], {(x_2+y_2)/ 2}-e_1\big), &\text{ if $e_1\leq e_2$,}\\
\big({(x_1+y_1)/ 2}-e_2, \left[{(x_2+y_2)/ 2}\right]\big), &\text{ if $e_1>e_2$,}
\end{cases}
\end{equation}
where $e_1$ is given by \eqref{e1} and 
\[
e_2={x_2+y_2\over 2}-\left[{x_2+y_2\over 2}\right].
\]

We can still show that $z\in M(x, y)$ in this case. We only give a proof for the case that $e_1\leq e_2$. The case that $e_1>e_2$ can be handled analogously. Note that when $e_1\leq e_2$, our choice of $z$ yields
\[
d(x, z)=\left|(y_1-x_1)/2-e_1\right|+\left|(y_2-x_2)/2-e_1\right|, 
\]
which implies that 
\[
d(x, z)= {1\over 2}|x_1-y_1|+{1\over 2}|x_2-y_2|={1\over 2}d(x, y)
\]
due to \eqref{lattice constraint}. We can show in a similar way that $d(y, z)=d(x, y)/2$ as well. 

We next show that the convexity inequality holds for $f$ with such a choice of $z$. We again discuss the two cases above. In Case 1, we have 
\[
f(z)=\left|{(x_1+y_1)/ 2}-k\right|+\left|(x_2+y_2)/ 2+k\right|  \quad \text{for $k=\min\{e_1, 1-e_1\}$}.
\]
Since 
\[
e_1\leq {x_1+y_1\over 2}\leq {x_2+y_2\over 2},
\]
it follows that 
\[
2f(z)=x_1+y_1+x_2+y_2\leq |x_1|+|y_1|+|x_2|+|y_2|=f(x)+f(y).
\]
The situation in Case 2 is simpler. By \eqref{eq:lattice3}, we get  
\[
2f(z)\leq |x_1+y_1|+|x_2+y_2|\leq |x_1|+|y_1|+|x_2|+|y_2|=f(x)+f(y).
\]  
If $x_2+y_2\leq x_1+y_1$ holds, we only need to switch the roles of $\{x_1, y_1\}$ and $\{x_2, y_2\}$ in the argument above. In general, when the inequalities $x_1+y_1\geq 0$ and $x_2+y_2\geq 0$ fail to hold, we are still able to find the midpoint $z$ as above by symmetry.

We next show that $u$ is not 1-strongly geodesically convex in $\L^2$. Take $x=(1, 0)$ and $y=(0, 1)$. It is clear that $z=(1, 1)$ is a midpoint of $x$ and $y$ but $u(x)=u(y)=1$ and $u(z)=2$.
\end{proof}

In spite of the relaxation of convexity notions on functions in $\L^2$, the solution of (HJ) still fails to preserve (1-weak geodesic) convexity in general. 
\begin{prop}[Non-preservation of 1-weak geodesic convexity in $\L^2$]
Let $\X=\L^2$ and $u(x, t)$ be the metric viscosity solution of the time-dependent Eikonal equation \eqref{c} with initial condition $u(\cdot , 0)=u_0$ in $\X$. Then there exists a 1-weakly geodesically convex, Lipschitz function $u_0$ on $\L^2$ such that the solution $u$ fails to be 1-weakly geodesically convex in $x$ at some $t>0$. 
\end{prop}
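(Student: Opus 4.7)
The plan is to exhibit a Lipschitz, $1$-weakly geodesically convex $u_0\colon \L^2 \to \R$ together with a pair $(x, y) \in \L^2 \times \L^2$ satisfying either \eqref{lattice constraint0} or \eqref{lattice constraint} such that, at some $t > 0$, every midpoint $z \in M(x, y)$ violates the inequality $2u(z, t) \le u(x, t) + u(y, t)$. By Theorem \ref{thm hopf-lax1}, the solution of \eqref{c} is given explicitly by
\[
u(x, t) = \inf_{a \in B_t(x)} u_0(a),
\]
so the whole analysis reduces to geometric statements about how the lattice-shaped balls $B_t$ of $\L^2$ interact with the sublevel sets of $u_0$.

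The natural approach is to exploit the failure of Busemann's inequality in $\L^2$, already witnessed after the metric is defined by the configuration $x=(0,0)$, $y=(0,2k)$, $y'=(k,2k)$. Concretely, I would first select a pair $(x, y)$ whose midpoint set $M(x, y)$ is "spread out" in a direction transverse to a natural segment from $x$ to $y$ — a phenomenon impossible in Busemann spaces but abundant in $\L^2$. Next, I would choose a subset $K \subset \L^2$ such that $d(x, K), d(y, K) \le t$ while $d(z, K) > t$ for every $z \in M(x, y)$; this uses the plus-sign shape of lattice balls, which lets $B_t(x)$ and $B_t(y)$ reach $K$ along distinct lattice directions inaccessible from any midpoint. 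Finally, I would take $u_0$ to be $d(\cdot, K)$ (or a nonnegative monotone modification that restores convexity along cardinal directions), so that $u_0$ is $1$-Lipschitz and the Hopf-Lax formula gives $u(x, t) = u(y, t) = 0$ while $u(z, t) > 0$ for every midpoint $z$, producing the desired strict violation.

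The main obstacle is reconciling the construction of $K$ with the $1$-weak geodesic convexity of $u_0$ at time zero. A single-point choice $K = \{p\}$ is too restrictive: for any constraint-compatible pair $(x', y')$ whose midpoints lie away from $p$, the inequality $2 d(z, p) \leq d(x', p) + d(y', p)$ must still be verified at some midpoint, and this typically fails unless $p$ is itself captured by the midpoint set. Thus $K$ must be a distributed set — for example a carefully placed union of lattice edges or vertices aligned with the non-Busemann configuration selected in the first step — and one must then check $1$-weak geodesic convexity of $u_0 = d(\cdot, K)$ by a case analysis exploiting the exclusion of pairs with $\min\{|x_1-y_1|, |x_2-y_2|\} \in (0, 1)$ built into the definition. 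This exclusion is essential, since distance functions to non-convex subsets of $\L^2$ generically fail convexity precisely on the near-diagonal pairs that $1$-weak convexity is designed to ignore.
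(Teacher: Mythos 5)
Your high-level strategy is the same as the paper's: use the Hopf--Lax/optimal-control formula $u(x,t)=\inf_{a\in B_t(x)}u_0(a)$ from Theorem \ref{thm hopf-lax1}, pick an admissible pair $(x,y)$ (satisfying \eqref{lattice constraint0} or \eqref{lattice constraint}) whose midpoint set is genuinely multi-valued, and show $2u(z,t)>u(x,t)+u(y,t)$ for \emph{every} $z\in M(x,y)$. That framing is correct. However, what you have written is a plan rather than a proof, and the piece you defer is precisely the piece that carries all the difficulty. You never exhibit the set $K$, never verify that $u_0=d(\cdot,K)$ (or its ``monotone modification'') is $1$-weakly geodesically convex and Lipschitz, and never verify that the balls $B_t(z)$ of all midpoints miss $K$ while $B_t(x)$ and $B_t(y)$ hit it. You yourself identify the tension: a singleton $K$ makes $u_0$ convex but cannot produce the violation (indeed $u(\cdot,t)=(\|\cdot\|-t)_+$ remains $1$-weakly geodesically convex since $r\mapsto (r-t)_+$ is convex nondecreasing), while a distributed $K$ generically destroys the convexity of $d(\cdot,K)$. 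Asserting that ``a case analysis'' will resolve this is not evidence that it can be resolved; as it stands the existence claim is unproven.

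It is also worth noting that the paper's actual construction sidesteps your binary scheme ($u=0$ at both endpoints, $u>0$ at all midpoints) entirely. It takes $u_0(x)=(x_1+1)x_2$ on the closed first quadrant and $0$ elsewhere (suitably extended), which is not a distance function to any set, and the pair $x=(k+1,k)$, $y=(k,3k)$ with $t=k$. The mechanism is asymmetric: $B_k(x)$ reaches the zero set of $u_0$, so $u(x,k)=0$, while $u(y,k)\le 3k$ and each of the three midpoints $z$ has $u(z,k)\ge 3k-3/4$, so $2u(z,k)-u(x,k)-u(y,k)\ge 3k-3/2$, a violation that grows with $k$. The multiplicative structure of $(x_1+1)x_2$ is what makes $u_0$ cheap to reach from $x$ (by decreasing $x_2$ to $0$) but expensive to reach from $y$ and from the midpoints, and this is the idea your proposal is missing. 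To complete your argument you would either need to produce a concrete $K$ with both required properties (which I suspect is not possible in the form you describe), or switch to an initial datum of the paper's type.
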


\begin{proof}
Let $R>0$ be sufficiently large. For any $x=(x_1, x_2)\in B_R(0, 0)\subset \L^2$, let
\[
u_0(x)=
\begin{cases}
(x_1+1)x_2 & \text{if $x_1\geq 0$ and $x_2\geq 0$},\\
0 &\text{otherwise.}
\end{cases}
\]
It is not difficult to see that $u_0$ is 1-weakly geodesically convex in $B_R(0, 0)$.
We extend $u_0$ outside $B_R(0, 0)$ so that $u_0$ is Lipschitz and 1-weakly geodesically convex in $\L^2$. 

Recall that by the optimal control interpretation the solution $u$ is given by \eqref{eq hl1}.
In particular, taking $x=(k+1, k)$, $y=(k, 3k)$ for a positive integer $k\geq 4$ and letting $R\geq 5k$, we have 
\[
u(x, k)=0, \quad u(y, k)\leq u_0((0, 3k))=3k.
\]
On the other hand, we consider the value $u(z, k)$ for all $z\in M(x, y)$. There are three choices of $z$: (a) $z=(k, 2k-1/2)$, (b) $z=(k+1/2, 2k)$ and (c) $z=(k+1, 2k+1/2)$.
By using \eqref{eq hl1} again, we have
\[
u(z, k)
= \begin{cases}
3k-3/4 & \text{if $z=\left(k, 2k-{1/2}\right)$},\\
3k &\text{if $z=\left(k+1/2, 2k\right)$},\\
5k &\text{if $z=(k+1, 2k+1/2)$}.
\end{cases}
\]
In any case, we get
\[
u(x, k)+u(y, k)-2u(z, k)<-2k,
\]
which indicates the failure of preservation of 1-weak geodesic convexity. 
\end{proof}

We obtain an affirmative but weaker convexity result if we further relax the notion of convexity by adopting Definitions \ref{def wp-convex} and \ref{def point-convex}. In view of Theorem \ref{thm convexity4}, if we change the sign of the Hamiltonian $H$ and assume that $u_0$ is infinity-subharmonious in $\L^2$, then the solution $u$ is also infinity-subharmonious in $\L^2$ for all time $t\geq 0$. Thanks to Proposition \ref{prop har-pt} it follows that $u(\cdot, t)$ is pointwise convex, as defined in Definition \ref{def u wp-convex}, for all time $t\geq 0$.

We finally remark that Theorem \ref{thm convexity4}, applied to the current case in $\L^2$,  does not only yield trivial (constant preserving) situations. Indeed, it is easily seen that $x\mapsto \|x\|$ is infinity-subharmonious and pointwise convex in $\L^2$.

\section*{Appendix}

We give the proof of Theorem \ref{thm hopf-lax1}, which is based on an approximation of $H(p) = p$.

\begin{proof}[Proof of Theorem \ref{thm hopf-lax1}]
Consider the family of functions $p \mapsto p^\alpha/\alpha$ with $\alpha \in (1, \infty)$.
Note that it converges to $p$ locally uniformly as $\alpha \to 1$.
Since, for each $\alpha$, $H(p) = p^\alpha/\alpha$ satisfies the conditions (H1)--(H3),
we see by Theorem \ref{thm solhj} that the unique solution of \eqref{hj1} with $H(p) = p^\alpha/\alpha$ is given by
\[
u^\alpha(x, t) := \inf_{a \in \X}\left\{ u_0(a)+{t \over \beta}\left({d(a, x) \over t}\right)^\beta\right\},
\]
where $\beta := \alpha/(\alpha-1) \in (1, \infty)$.

Let us estimate
\[
u^\alpha(x, t)-u(x, t)
= \inf_{a \in \X}\left\{ u_0(a)+{t \over \beta}\left({d(a, x) \over t}\right)^\beta\right\}-\inf_{b \in B_t(x)}u_0(b)
\]
for $\alpha > 1$, $t \in (0, \infty)$, $x \in X$.
For simplicity, we will write $\vep := \alpha-1$.

The upper bound estimate is easier:
Note that there is $b \in B_t(x)$ such that $u_0(b) < u(x, t)+\vep$.
Taking $a = b$ we have
\[
u^\alpha(x, t)-u(x, t)
\le {t \over \beta}\left({d(a, x) \over t}\right)^\beta+\vep
\le t/\beta+\vep
= (\alpha-1)t/\alpha+\vep.
\]

To obtain the lower bound estimate, we apply Proposition \ref{thm finiteprop} and obtain \eqref{eq hl2} with
\[
V = V_\alpha := (\beta K)^{1 \over \beta-1} = \left({\alpha \over \alpha-1}K\right)^{\alpha-1},
\]
where $K$ is the Lipschitz constant of $u_0$.
Hence, there is $a \in B_{V_\alpha t}(x)$ such that
\[
u_0(a)+{t \over \beta}\left({d(a, x) \over t}\right)^\beta \le u^\alpha(x, t)+\vep.
\]
Thanks to the existence of a geodesic between $x$ and $a$,
we can take $b \in B_t(x)$ such that
$
d(b, a) \le (V_\alpha-1)t
$
and then
\[
u^\alpha(x, t)-u(x, t)
\ge u_0(a)+{t \over \beta}\left({d(a, x) \over t}\right)^\beta-u_0(b)-\vep
\ge -K (V_\alpha-1)t-\vep.
\]
Note that the right hand side converges to $0$ locally uniformly as $\alpha \to 1$ because $V_\alpha \to 1$ by the definition.

In conclusion, we see that $u^\alpha$ converges to $u$ locally uniformly on $[0, \infty)\times X$.
Since $p^\alpha/\alpha$ converges to $p$ locally uniformly on $\Rp$,
the stability property (\cite{NN}) implies that $u$ is a metric viscosity solution of \eqref{c}.
\end{proof}

\bibliographystyle{abbrv}%
\bibliography{bib_liu}%
\end{document}